%% file: tie_ff.tex
\documentclass[10pt,reqno]{amsproc}

\usepackage{fullpage}
\input{preamble}

\begin{document}


\title{Ties in Function Field Prime Races}


\author{Graeme Bates, Ryan Jesubalan, Seewoo Lee, Jane Lu, and Hyewon Shim}
\date{}


\begin{abstract}
    The function field analogue of Chebyshev's bias was first studied by Cha. In this paper, we study \emph{ties} in this race, namely collections of distinct congruence classes \(c_1, \dots, c_k \in (\bF_q[T] / m)^\times\) for which
    \[
        \pi(N; m, c_1) = \pi(N; m, c_2) = \dots = \pi(N; m, c_k)
    \]
    holds for infinitely many $N$.
    We provide infinitely many examples of \((m, c_1, \dots, c_k)\) for which the tie holds whenever $N$ satisfies certain congruence conditions.
    We give two different proofs: first, via the explicit formula for prime counts in terms of \(L\)-functions together with a matrix analogue of M\"obius inversion, where exceptional pairs of Galois-conjugate elements in the corresponding cyclotomic fields produce ties; and second, via an explicit bijection arising from the \(\GL_2(\bF_q)\)-action.
    Our examples also include characteristic 2 cases.
\end{abstract}


\maketitle

\input{1intro}
\input{2preliminary}
\input{3explicitformula}
\input{4bijection}
\input{5conclusion}


\bibliographystyle{acm} 
\bibliography{refs} 

\medskip

\newpage
\input{appendix}






\end{document}

%% file: preamble.tex
\usepackage[dvipsnames]{xcolor}

\usepackage{
  amsmath, amsthm, amssymb, mathtools, dsfont, units,          
  graphicx, wrapfig, subfig, float,                            
  listings, color, inconsolata, pythonhighlight,               
  fancyhdr,
  hyperref,
  enumerate,
  enumitem,
  framed
}   

\usepackage{hyperref, lipsum} 
\usepackage{bm}
\usepackage{subfig}

\usepackage{newpxtext, newpxmath, inconsolata}
\usepackage{amsfonts}
\usepackage{pgfplots}
\pgfplotsset{compat=1.12}
\usepackage{tkz-fct}
\usepackage{svg}
\usepackage{tikz}
\usepackage{tikz-cd}
\usepackage{lipsum}
\usepackage{rank-2-roots}
\usepackage[utf8]{inputenc}
\usepackage{multicol}
\usepackage{multirow}
\usepackage{pifont}

\hypersetup{colorlinks=true, linkcolor=red, citecolor=blue, filecolor=blue, urlcolor=blue}

\DeclareUnicodeCharacter{3BC}{$\pi$}
\DeclareUnicodeCharacter{3C0}{$\pi$}

\usepackage{booktabs}

\usepackage[bottom]{footmisc}

\allowdisplaybreaks

\usepackage[font={it,footnotesize}]{caption}

\usepackage{titlesec}
\titleformat{\section}{\large\bfseries}{\thesection\;\;\;}{0em}{}
\titleformat{\subsection}{\normalsize\bfseries\selectfont}{\thesubsection\;\;\;}{0em}{}
\titleformat{\subsubsection}{\normalsize\bfseries\selectfont}{\thesubsubsection\;\;\;}{0em}{}

\setlist[itemize]{wide=0pt, leftmargin=16pt, labelwidth=10pt, align=left}

\graphicspath{{Images/}{../Images/}}

\usepackage[mathscr]{euscript}

\newcommand{\what}[1]{\widehat{#1}}

\newcommand{\GL}{\mathrm{GL}}

\newcommand{\Tr}{\mathrm{Tr}}

\newcommand{\Hom}{\mathrm{Hom}}

\newcommand{\id}{\mathrm{id}}
\newcommand{\lc}{\mathrm{lc}}

\newcommand{\dd}{\mathrm{d}}

\newcommand{\bC}{\mathbb{C}}

\newcommand{\bF}{\mathbb{F}}

\newcommand{\bQ}{\mathbb{Q}}

\newcommand{\bZ}{\mathbb{Z}}

\newcommand{\cI}{\mathcal{I}}

\newcommand{\cL}{\mathcal{L}}

\newcommand{\scU}{\mathscr{U}}

\theoremstyle{definition}
\newtheorem{theorem}{Theorem}[section]
\newtheorem{proposition}[theorem]{Proposition}
\newtheorem{lemma}[theorem]{Lemma}
\newtheorem{corollary}[theorem]{Corollary}

\newtheorem{conjecture}[theorem]{Conjecture}

\theoremstyle{remark}
\newtheorem{remark}[theorem]{Remark}

\newtheorem*{theorem*}{Theorem}
\newtheorem*{proposition*}{Proposition}
\newtheorem*{lemma*}{Lemma}
\newtheorem*{corollary*}{Corollary}
\newtheorem*{definition*}{Definition}
\newtheorem*{example*}{Example}
\newtheorem*{remark*}{Remark}

\newtheorem*{conjecture*}{Conjecture}

\usepackage{fancyhdr}
\newenvironment{red}{\relax\color{red}}{\hspace*{.5ex}\relax}
\newenvironment{blue}{\relax\color{blue}}{\hspace*{.5ex}\relax}
\newcommand{\ber}{\begin{red}}
\newcommand{\er}{\end{red}}
\newcommand{\beb}{\begin{blue}}
\newcommand{\eb}{\end{blue}}

\definecolor{myblue}{RGB}{0, 100, 255}
\definecolor{myred}{RGB}{220, 50, 47}
\definecolor{mygreen}{RGB}{0,150,0}
\newcommand{\myg}[1]{\textcolor{mygreen}{#1}}
\newcommand{\myb}[1]{\textcolor{myblue}{#1}}
\newcommand{\myr}[1]{\textcolor{myred}{#1}}

%% file: 1intro.tex
\section{Introduction}
\label{sec:intro}

\begin{table}[h!]
\centering
\begin{tabular}{c|cccccccc}
\toprule
\textbf{$N$} & \textbf{$1$} & \textbf{$T$} & \textbf{$T^2$} & \textbf{$T+1$} & \textbf{$T^2+T$} & \textbf{$T^2+T+1$} & \textbf{$T^2+1$} \\
\midrule
9  & \myg{7}     & \myb{9}     & \myg{7}     & \myb{9}     & \myb{9}     & {8}     & \myg{7}     \\
10 & \myg{15}    & \myb{14}    & \myg{15}    & \myg{15}    & {12}    & \myb{14}    & \myb{14}    \\
11 & \myg{28}    & \myb{26}    & \myb{26}   & {24}   & \myg{28}     & \myg{28}     & \myb{26}    \\
12 & \myg{46}    & \myg{46}     & {50}   & \myb{49}   & \myb{49}    & \myg{46}     & \myb{49}    \\
13 & \myb{89}   & {96}    & \myb{89}   & \myb{89}   & \myb{89}    & \myb{89}    & \myb{89}    \\
14 & {168}  & \myg{162}    & \myg{162}   & \myb{169}  & \myg{162}    & \myb{169}   & \myb{169}   \\
15 & \myg{310}   & \myg{310}    & \myb{316}  & \myg{310}   & \myb{316}   & \myb{316}   & {304}   \\
16 & \myg{588}   & \myb{582}   & \myg{588}   & \myb{582}  & \myb{582}   & {570}   & \myg{588}    \\
17 & \myg{1093}  & \myb{1109}  & \myg{1093}  & \myg{1093}  & {1104}  & \myb{1109}  & \myb{1109}  \\
18 & \myg{2075}  & \myb{2069}  & \myb{2069} & {2100} & \myg{2075}   & \myg{2075}   & \myb{2069}  \\
19 & \myg{3951}  & \myg{3951}   & {3960} & \myb{3927} & \myb{3927}  & \myg{3951}   & \myb{3927}  \\
20 & \myg{7502}  & {7458}  & \myb{7471} & \myb{7471} & \myg{7502}   & \myb{7471}  & \myg{7502}   \\
21 & {14208}& \myg{14282}  & \myg{14282} & \myb{14268}& \myg{14282}  & \myb{14268} & \myb{14268} \\
22 & \myg{27258} & \myg{27258}  & \myb{27189}& \myg{27258} & \myb{27189} & \myb{27189} & {27216} \\
\vdots & \vdots & \vdots & \vdots & \vdots & \vdots & \vdots & \vdots \\
\midrule
$0 \pmod{7}$ &   & \myg{\ding{73}}    & \myg{\ding{73}}   & \myb{$\circ$}  & \myg{\ding{73}}    & \myb{$\circ$}   & \myb{$\circ$}   \\
$1 \pmod{7}$ & \myg{\ding{73}}   & \myg{\ding{73}}    & \myb{$\circ$}  & \myg{\ding{73}}   & \myb{$\circ$}   & \myb{$\circ$}   &    \\
$2 \pmod{7}$ & \myg{\ding{73}}   & \myb{$\circ$}   & \myg{\ding{73}}   & \myb{$\circ$}  & \myb{$\circ$}   &    & \myg{\ding{73}}    \\
$3 \pmod{7}$ & \myg{\ding{73}}  & \myb{$\circ$}  & \myg{\ding{73}}  & \myg{\ding{73}}  &   & \myb{$\circ$}  & \myb{$\circ$}  \\
$4 \pmod{7}$ & \myg{\ding{73}}  & \myb{$\circ$}  & \myb{$\circ$} &  & \myg{\ding{73}}   & \myg{\ding{73}}   & \myb{$\circ$}  \\
$5 \pmod{7}$ & \myg{\ding{73}}  & \myg{\ding{73}}   &  & \myb{$\circ$} & \myb{$\circ$}  & \myg{\ding{73}}   & \myb{$\circ$}  \\
$6 \pmod{7}$ & \myg{\ding{73}}  &   & \myb{$\circ$} & \myb{$\circ$} & \myg{\ding{73}}   & \myb{$\circ$}  & \myg{\ding{73}}   \\
\bottomrule
\end{tabular}%
\caption{Number of irreducible polynomials modulo \(T^3 + T + 1 \in \mathbb{F}_2[T]\) in each of the seven congruence classes for each degree $N$, together with the resulting patterns modulo $7$. The columns are ordered as \(1, T, T^2, T^3, \dots, T^6\).}
\label{tab:T3T1}
\end{table}

In 1853, Chebyshev \cite{chebyshev1853lettre} observed that there are typically more primes of the form $4n + 3$ than of the form $4n + 1$ up to certain thresholds, and he conjectured that the count is essentially \emph{biased} toward primes $\equiv 3 \pmod{4}$.
More precisely, if we denote by $\pi(x; m, a)$ the number of primes $\le x$ that are congruent to $a$ modulo $m$, then he conjectured that the difference
\[
\Delta(x) := \pi(x; 4, 3) - \pi(x; 4, 1)
\]
is positive for all sufficiently large $x$.
However, Littlewood \cite{littlewood1914distribution} proved that there are infinitely many sign changes of $\Delta(x)$; in particular, there are infinitely many $x$ with $\Delta(x) < 0$.
Later, under the Linear Independence conjecture (LI) for Dirichlet $L$-functions, Rubinstein and Sarnak \cite{rubinstein1994chebyshev} gave a more precise analysis, showing that the relevant \emph{logarithmic density} is about $0.9959$, which is close to 1 but not equal to it.
Subsequent work studied such \emph{prime number races} in more general settings; see \cite{granville2006prime}.

Cha \cite{cha2008chebyshev} studied Chebyshev's bias over function fields.
Following Rubinstein and Sarnak's argument, he proved that a similar bias in prime counts toward non-quadratic residues exists under the Grand Simplicity Hypothesis (GSH) for Dirichlet $L$-functions.
While LI is believed to be true over number fields, Cha found several examples of Dirichlet $L$-functions over function fields that \emph{do not} satisfy GSH and that also exhibit bias in unexpected directions.

In this paper, we study \emph{ties} in Chebyshev's bias over function fields.
Let $m \in \bF_q[T]$ be a polynomial and $a \in (\bF_q[T] / m)^\times$ be a congruence class modulo $m$.
For $N \ge 1$, let $\pi(N; m, a)$ be the number of irreducible monic polynomials in $\bF_q[T]$ of degree $N$ which are congruent to $a$ modulo $m$.
We exhibit a family of examples of moduli $m$ and distinct congruence classes $c_1, \dots, c_k \in (\bF_q[T] / m)^\times$ for which
\[
\pi(N; m, c_1) = \pi(N; m, c_2) = \dots = \pi(N; m, c_k)
\]
holds for all $N$ in certain arithmetic progressions.
For example, the following theorem holds for \(m = T^3 + T + 1 \in \bF_2[T]\).
\begin{theorem}
    \label{thm:p2T3T1ex}
    Let \(m = T^3 + T + 1 \in \bF_2[T]\).
    For all $N \equiv 1 \pmod{7}$, we have
    \begin{equation}
    \label{eqn:tieN1mod7} 
        \pi(N; m, 1) = \pi(N; m, T) = \pi(N; m, T + 1)\quad\text{and}\quad \pi(N; m, T^2) = \pi(N; m, T^2 + T) = \pi(N; m, T^2 + T + 1).
    \end{equation}
\end{theorem}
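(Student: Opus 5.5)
The plan is to use the $\GL_2(\bF_2)$-action on polynomials and build an explicit degree-preserving bijection on irreducibles that permutes residue classes modulo $m$ in the required way.

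\textbf{Setup.} Since $m = T^3+T+1$ is irreducible over $\bF_2$, we have $(\bF_2[T]/m)^\times \cong \bF_8^\times$, which is cyclic of order $7$ and generated by the class of $T$. Writing classes as $T^a$, we have $T^3 = T+1$, $T^4 = T^2+T$ and $T^5 = T^2+T+1$. So the two asserted ties are the equalities of $\pi(N;m,T^a)$ for $a\in\{0,1,3\}$ and for $a \in \{2,4,5\}$. Let $\alpha\in\bF_8$ be the root of $m$ corresponding to $T$. The class of $f$ modulo $m$ is then determined by $f(\alpha)\in\bF_8^\times$, and $f \equiv T^a$ exactly when $f(\alpha)=\alpha^a$.

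\textbf{The involution-free map.} Take the order-$3$ Möbius transformation $\gamma(T) = 1/(T+1)$. For $f$ of degree $N$, define
\[
f^\gamma(T) := (T+1)^N f\bigl(1/(T+1)\bigr).
\]
I will check the following properties.
\begin{enumerate}
\item For irreducible $f$ of degree $N\ge 2$, $f^\gamma$ is again irreducible of degree $N$. Its leading coefficient is $f(1)$, which is nonzero and hence equal to $1$ over $\bF_2$, so $f^\gamma$ is monic.
\item $f \mapsto f^\gamma$ is a bijection on monic irreducibles of degree $N$. Indeed $\gamma$ has order $3$ in $\mathrm{PGL}_2(\bF_2)$, and applying the construction three times returns $f$ up to a scalar, which is trivial over $\bF_2$.
\end{enumerate}
The case $N=1$, where $N\equiv 1 \pmod 7$, can be checked by hand.

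\textbf{Effect on residue classes.} Next compute $f^\gamma(\alpha)$. The map $\gamma$ commutes with Frobenius and acts freely on $\bF_8\setminus\bF_2$. One checks directly that it preserves the set of roots of $m$, with $\gamma(\alpha)=1/(\alpha+1)=\alpha^{-3}=\alpha^4$. Hence
\[
f^\gamma(\alpha) = (\alpha+1)^N f(\alpha^4) = \alpha^{3N} f(\alpha)^4 .
\]
Therefore $f\equiv T^a$ implies $f^\gamma \equiv T^{3N+4a}$. For $N\equiv 1\pmod 7$ this is the permutation $a\mapsto 3+4a \pmod 7$ of exponents. Its orbits are $0\to3\to1\to0$ and $2\to4\to5\to2$, and it fixes $6$. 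Since $f\mapsto f^\gamma$ is a bijection between irreducibles in class $T^a$ and irreducibles in class $T^{3+4a}$, the cardinalities agree along each orbit, which gives \eqref{eqn:tieN1mod7}.

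\textbf{Main obstacle.} The delicate step is verifying that $\gamma$ sends $\alpha$ to a Galois conjugate of $\alpha$ rather than to a root of $T^3+T^2+1$. This is exactly what $\gamma(\alpha)=\alpha^4$ records, and it is a finite computation in $\bF_8$. Beyond that, one needs the bookkeeping that the degree and monicity of $f^\gamma$ are preserved. This can fail only when $f(1)=0$ or $f$ is divisible by $T$, and for irreducible $f$ of degree $\ge 2$ neither happens.
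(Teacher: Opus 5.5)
Your argument is, for $N \ge 2$, essentially the paper's bijective proof in Section~\ref{subsec:bij_T3T1}. Your map $f \mapsto (T+1)^N f(1/(T+1))$ is $f \mapsto f|_N B'$ with $B' = \left(\begin{smallmatrix} 0 & 1 \\ 1 & 1 \end{smallmatrix}\right)$. This is the inverse (equivalently, the square) of the paper's $B = \left(\begin{smallmatrix} 1 & 1 \\ 1 & 0 \end{smallmatrix}\right)$, so you get the same $3$-cycles of classes, traversed in the opposite direction. The only difference is bookkeeping. The paper uses $f = gm + c \Rightarrow f|_N B \equiv c|_N B \pmod m$ together with periodicity in $N$ (Lemma~\ref{lem:action_period}). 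You instead evaluate at a root, $f^\gamma(\alpha) = \alpha^{3N} f(\alpha)^4$, which gives the exponent permutation $a \mapsto 3N + 4a \pmod 7$ for all $N$ at once. That is a clean way to organize it.

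Two points need correcting, and the first one matters.

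\textbf{The case $N = 1$ is false, not checkable.} The monic irreducibles of degree $1$ are $T$ and $T+1$. So $\pi(1; m, 1) = 0$ while $\pi(1; m, T) = \pi(1; m, T+1) = 1$; compare the first row of Table~\ref{tab:T3T1cum}. Only the second tie survives at $N=1$, trivially, with all three counts equal to $0$. Hence the first tie in \eqref{eqn:tieN1mod7} holds only for $N \equiv 1 \pmod 7$ with $N \ge 8$. This is also all the paper's proofs deliver: the explicit-formula argument discards the $(*)$-terms using $\sum_{d \mid N}\mu(d) = 0$, valid for $N > 1$, and the bijection needs $N \ge 2$. Your own map exhibits the failure: it sends $T \mapsto (T+1)\cdot\frac{1}{T+1} = 1$, which drops the degree. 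You should state the restriction $N > 1$ explicitly rather than claim a verification.

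\textbf{The leading coefficient is misidentified.} The leading coefficient of $f^\gamma = \sum_i a_i (T+1)^{N-i}$ is $a_0 = f(0)$, not $f(1)$; the value $f(1)$ is the constant term of $f^\gamma$. Since $f(0) \ne 0$ for irreducible $f$ of degree $\ge 2$, monicity and degree preservation are unaffected.
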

Table \ref{tab:T3T1} shows these ties, along with the analogous patterns for the other residue classes modulo $7$.

To prove such ties, we use two different approaches.
The first method uses explicit formulas for the prime counts $\pi(N; m, a)$ in terms of zeros of Dirichlet $L$-functions and a matrix version of M\"obius inversion, where the ties arise from ``exceptional'' Galois conjugates (Section \ref{sec:explicit}).
The second method constructs bijections between primes in different congruence classes via \(\GL_2(\bF_q)\)-actions (Section \ref{sec:bijection}).
The accompanying Sage code used to implement these explicit formulas and generate the tables can be found in the GitHub repository \href{https://github.com/seewoo5/sage-function-field}{github.com/seewoo5/sage-function-field}.

\subsection*{Acknowledgements}

This work was initiated during the 2025 Berkeley Math REU program and supported by NSF RTG grant DMS-2342225, MPS Scholars, and the Gordon and Betty Moore Foundation.
We thank Tony Feng for organizing the REU, and we also thank Alexandre Bailleul for helpful comments.

%% file: 2preliminary.tex
\section{Preliminaries}
\label{sec:prelim}

\subsection{Arithmetic of function fields}
\label{subsec:prelim_arith}

Let $p$ be a prime and let $A = \bF_q[T]$ be the polynomial ring over a finite field $\bF_q$ of size $q = p^k$.
We write $A^\circ \subset A$ for the set of monic polynomials, and let $\cI \subset A^\circ$ be the set of monic irreducible polynomials.
For each $N$, we write $\cI(N)$ for the set of monic irreducible polynomials of degree $N$, and $\pi(N) = \# \cI(N)$ for the number of such polynomials.
The famous formula due to Gauss tells us that
\begin{equation}
\label{eqn:gauss_cnt}    
\pi(N) = \frac{1}{N} \sum_{d \mid N} \mu(d) q^{N / d} = \frac{q^N}{N} + O\left(\frac{q^{N/2}}{N}\right)
\end{equation}
where $\mu$ is the M\"obius function; this is the function field analogue of the prime number theorem.
When $m \in A^\circ$ is a monic polynomial and $c \in A$ is another polynomial coprime to $m$, we write $\cI(N; m, c)$ for the set of monic irreducible polynomials of degree $N$ that are congruent to $c$ modulo $m$.
Then $\pi(N; m, c) := \# \cI(N; m, c)$ is known to be
\begin{equation}
\label{eqn:dirichlet_cnt}    
\pi(N; m, c) = \frac{1}{\Phi(m)} \frac{q^N}{N} + O\left(\frac{q^{N/2}}{N}\right)
\end{equation}
as $N \to \infty$, where $\Phi(m) = \# (A / m)^\times$ is the Euler totient function for $A$; this is the function field analogue of Dirichlet's theorem on arithmetic progressions.

For a Dirichlet character $\chi$ of modulus $m \in A^\circ$, we define the corresponding $L$-function as
\begin{equation}
\label{eqn:Lchi_euler}
    L(s, \chi) :=  \sum_{n \ge 0} \left(\sum_{\substack{f \in A^\circ \\ \deg f = n}}\chi(f)\right) q^{-ns} = \prod_{\substack{P \in \cI \\ P \nmid m}} (1 - \chi(P)q^{-(\deg P)s})^{-1}
\end{equation}
which is known to be a polynomial in $u = q^{-s}$ of degree at most $M - 1$, where $M = \deg m$.
We denote the corresponding polynomial as $\cL(u, \chi) \in \bC[u]$.
Weil's Riemann Hypothesis for function fields shows that the zeros of $L(s, \chi)$ satisfy $s = 0$ or $\Re s = \frac{1}{2}$.

\subsection{Chebyshev's bias}
\label{subsec:prelim_chebyshev}

Chebyshev's bias \cite{chebyshev1853lettre} refers to the phenomenon that the number of primes of the form $4k + 3$ is usually larger than the number of those of the form $4k + 1$.
This generalizes to arbitrary modulus, where primes are biased against quadratic residues.
Major progress on the conjecture was made by Rubinstein and Sarnak \cite{rubinstein1994chebyshev}, who proved that such a bias exists for any modulus under the Linear Independence hypothesis (LI) for Dirichlet $L$-functions, which asserts that all nonnegative ordinates of Dirichlet $L$-functions are linearly independent over $\bQ$.

Some earlier works \cite{ingham1935note,knapowski1961sign,pintzsign,martin2020inclusive} also studied the number of \emph{sign changes} or \emph{ties} in the bias, and every sign change gives an obvious lower bound for the number of ties in the same prime race.
Although \cite{ingham1935note,pintzsign,knapowski1961sign} mostly focuses on sign changes of $V(x) := \pi(x) - \mathrm{li}(x)$, the same arguments generalize to the mod $4$ race $\Delta(x)$.
Ingham \cite{ingham1935note} proved that $V(x) = \Omega(\log x)$, assuming that the supremum of the real parts of the zeros of $\zeta(s)$ is attained by a zero, which is weaker than RH.
An unconditional (ineffective) lower bound of $V(x) = \Omega(\log \log x)$ was first obtained by Knapowski \cite{knapowski1961sign}, while Pintz \cite{pintzsign} proved the effective lower bound $V(x) = \Omega(\sqrt{\log x} / \log \log x)$.
Martin and Ng \cite{martin2020inclusive} studied prime races under hypotheses weaker than LI and proved that the ties have logarithmic density zero.

Cha \cite{cha2008chebyshev} studied the function field analogue of Chebyshev's bias. Following Rubinstein and Sarnak's argument, he proved analogous results for polynomials over finite fields of odd characteristic.
More precisely, for a monic polynomial $m \in \bF_q[T]$, if GSH holds for Dirichlet $L$-functions of Dirichlet characters modulo $m$, he proved that the natural density of degrees $N$ for which there are more non-quadratic residues of degree $N$ than quadratic residues modulo $m$ is larger than $\frac{1}{2}$.
Surprisingly, there are examples in which the Grand Simplicity Hypothesis (GSH, the function field analogue of LI for $L$-functions) fails, and he also gave examples in which the bias points in an unexpected direction, such as toward quadratic residues.
He also proved that the bias vanishes as $\deg m \to \infty$.

Bailleul, Devin, Keliher, and Li studied exceptional biases in function fields \cite{bailleul2024exceptional}, such as \emph{complete bias}, \emph{lower-order bias}, and \emph{reversed bias}.
In particular, when $m$ is a square-free monic polynomial and $\chi_m$ is the unique principal quadratic character modulo $m$ (so necessarily $q$ is odd), they gave necessary and sufficient conditions for $\chi_m$ to admit lower-order bias, that is, for the set of degrees $N$ with ties
\begin{equation}
\begin{aligned}
    &\# \{ f \in \bF_q[T] : f\text{ monic, irreducible, } \deg f = N, \chi_m(f) = 1\} \\
    &= \# \{ f \in \bF_q[T] : f\text{ monic, irreducible, } \deg f = N, \chi_m(f) = -1\} 
\end{aligned}
\end{equation}
has positive natural density.
For example, they proved that there is lower-order bias for $\chi_m$ when the completed $L$-polynomial for $\chi_m$ is even (see \cite[Lemma 5.2]{bailleul2024exceptional} for the precise statement).

%% file: 3explicitformula.tex
\section{Explicit counting formulas}
\label{sec:explicit}

\subsection{Matrix M\"obius inversion and explicit counting formulas}

As mentioned earlier in Section \ref{subsec:prelim_arith}, one can prove \eqref{eqn:dirichlet_cnt} by considering all Dirichlet $L$-functions together and using Weil's RH to estimate the error terms.
Here we compute all terms more explicitly, without introducing error bounds, and obtain exact formulas for $\pi(N; m, c)$ as a M\"obius inversion of matrices associated with the modulus $m$.

Let \(M = \deg m\). 
We denote the unit group \((A / m)^\times\) and its character group as \(\scU_m := (A / m)^\times\) and \(\what{\scU}_m := \Hom(\scU_m, \bC^\times)\), respectively.
Let $M' = \Phi(m) = |\scU_m|$ be the size of the group.
For \(\chi \in \what{\scU}_m\) and $n \ge 1$, define
\begin{equation}
\label{eqn:Achi}
A_\chi(n) := \sum_{c \in \scU_m} \pi(n; m, c) \chi(c).
\end{equation}
For nontrivial $\chi$, write $\alpha_1(\chi), \dots, \alpha_d(\chi)$ with $d = d(\chi)$ for the inverse zeros of $\cL(u, \chi)$, i.e.
\begin{equation}
    \cL(u, \chi) = \prod_{j=1}^{d(\chi)} (1 - \alpha_j(\chi) u).
\end{equation}
By taking the logarithmic derivative, one easily sees that
\begin{equation}
\label{eqn:Llogder}
    u \frac{\dd}{\dd u} \log \cL(u, \chi) = \frac{u \cL'(u, \chi)}{\cL(u, \chi)} = -\sum_{n \ge 1} \left(\sum_{j=1}^{d(\chi)}\alpha_j(\chi)^n\right) u^n =: \sum_{n \ge 1} c_n(\chi) u^n.
\end{equation}
where
\begin{equation}
\label{eqn:cn}
    c_n(\chi) := -\sum_{j=1}^{d(\chi)} \alpha_j(\chi)^n.
\end{equation}
Taking the logarithmic derivative of the Euler product expansion \eqref{eqn:Lchi_euler} gives
\begin{align}
    u \frac{\dd}{\dd u} \log \cL(u, \chi) &= \sum_{P \in \cI} \frac{\chi(P) \deg P u^{\deg P}}{1 - \chi(P) u^{\deg P}} = \sum_{P \in \cI} \deg P\sum_{k \ge 1} \chi(P)^k u^{k \deg P} \nonumber \\
    &= \sum_{n \ge 1} \sum_{d \mid n} d \sum_{P \in \cI(d)} \chi(P)^{n/d} u^n \nonumber \\
    &= \sum_{n \ge 1} \sum_{d \mid n} d \sum_{c \in \scU_m} \pi(d; m, c) \chi^{n/d}(c) u^n \nonumber \\
    &= \sum_{n \ge 1} \left(\sum_{d \mid n} d A_{\chi^{n/d}}(d)\right) u^n\label{eqn:cn2}
\end{align}
and comparing \eqref{eqn:cn} and \eqref{eqn:cn2} gives
\begin{equation}
\label{eqn:dAsum}
    c_n(\chi) = \sum_{d \mid n} d A_{\chi^{n/d}}(d).
\end{equation}
When $\chi = \chi_0$ is the trivial character modulo $m$, we have
\begin{equation}
    A_{\chi_0}(d) = \sum_{c \in \scU_m} \pi(d; m, c) = \pi(d) - \sum_{\substack{P \mid m\,\text{irred} \\ \deg P = d}} 1
\end{equation}
and summing $d A_{\chi_0}(d)$ over $d \mid n$ gives
\begin{equation}
\label{eqn:dAtrivsum}
    \sum_{d \mid n} dA_{\chi_0}(d) = \sum_{d \mid n} d \pi(d) - \sum_{d \mid n} d\sum_{\substack{P \mid m\,\text{irred} \\ \deg P = d}} 1 = q^n - \sum_{\substack{P \mid m\,\text{irred} \\ \deg P \mid n}} \deg P,
\end{equation}
where the last equality follows from \cite[Proposition 2.1]{rosen2013number}.
We denote
\begin{equation}
\label{eqn:sm}
    s_{m,n} := \sum_{\substack{P \mid m\,\text{irred} \\ \deg P \mid n}} \deg P
\end{equation}
so that the last term of \eqref{eqn:dAtrivsum} becomes $q^n - s_{m, n}$.
When $m$ is irreducible, then $s_{m, n} = M \delta_{M \mid n}$ where $\delta_{M \mid n}$ is 1 (resp. 0) if $M \mid n$ (resp. $M \nmid n$).

Now, define a map \(Z(n) \in \Hom(\bC^{\scU_m}, \bC^{\what{\scU}_m})\) as
\begin{equation}
    v = (v_a)_{a \in \scU_m} \mapsto v' = (v'_\chi)_{\chi \in \what{\scU}_m}, \,\, v'_\chi = \sum_{a\in \scU_m} \chi^n(a) v_a
\end{equation}
for each \(n \in \bZ\); this can be viewed as a matrix whose \((\chi, a)\)-th entry is \(\chi^n(a)\).
Then \eqref{eqn:dAsum} and \eqref{eqn:dAtrivsum} can be encoded as a system of linear equations
\begin{equation}
\label{eqn:coeffsystem1}
    \begin{pmatrix}
        q^n - s_{m, n} \\
        \vdots \\
        c_n(\chi) \\
        \vdots
    \end{pmatrix} = \sum_{d \mid n} d \begin{pmatrix}
        A_{\chi_0}(d) \\ \vdots \\ A_\chi(d) \\ \vdots
    \end{pmatrix} = \sum_{d \mid n} Z\left(\frac{n}{d}\right) \begin{pmatrix}
        d \pi(d; m, 1) \\ \vdots \\ d \pi(d; m, a) \\ \vdots
    \end{pmatrix}.
\end{equation}
To obtain a formula for \(\pi(N; m, a)\) by solving this system of equations, we prove a M\"obius inversion formula for \(Z(n)\).
For each \(n \ge 1\), we define \emph{M\"obius inverse} \(\widetilde{Z}(n) \in \Hom(\bC^{\what{\scU}_m}, \bC^{\scU_m})\) inductively as
\begin{align}
    \widetilde{Z}(1) Z(1) &= \id_{\bC^{\scU_m}} \label{eqn:Ztilde_one} \\
    \sum_{d \mid n} \widetilde{Z}(d) Z\left(\frac{n}{d}\right) &= 0 \Leftrightarrow \widetilde{Z}(n) Z(1) = -\sum_{d \mid n, d < n} \widetilde{Z}(d) Z\left(\frac{n}{d}\right)\quad (n \ge 2) \label{eqn:Ztilde_rec}
\end{align}
Then we can apply ``M\"obius inversion,'' that is, take the Dirichlet convolution of \(\widetilde{Z}(n)\) with \eqref{eqn:coeffsystem1} to get
\begin{equation}
    \begin{pmatrix}
        \pi(N; m, 1) \\ \vdots \\ \pi(N; m, a) \\ \vdots
    \end{pmatrix} = \frac{1}{N} \sum_{d \mid N} \widetilde{Z}(d) \begin{pmatrix}
        q^{\frac{N}{d}} - s_{m, \frac{N}{d}} \\ \vdots \\ c_{\frac{N}{d}}(\chi) \\ \vdots
    \end{pmatrix}
\end{equation}
and if we write the \((a, \chi)\)-th entry of \(\widetilde{Z}(d)\) as \(\widetilde{Z}(d)_{a, \chi}\), then this gives:
\begin{theorem}
    \label{thm:picformula}
    Let $N \ge 1$, $m \in \bF_q[T]$, and $a \in (\bF_q[T] / m \bF_q[T])^\times$ be a congruence class.
    Then
    \begin{equation}
    \label{eqn:picformula1}
        \pi(N; m, a) = \frac{1}{N} \sum_{d \mid N} \left(\widetilde{Z}(d)_{a, \chi_0} (q^{\frac{N}{d}} - s_{m, \frac{N}{d}}) - \sum_{\chi \ne \chi_0} \widetilde{Z}(d)_{a, \chi}\sum_{j=1}^{d(\chi)} \alpha_j(\chi)^{\frac{N}{d}} \right).
    \end{equation}
\end{theorem}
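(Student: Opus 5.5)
The plan is to make the ``M\"obius inversion'' step rigorous and then read off the $a$-th row. Write $x(d) \in \bC^{\scU_m}$ for the vector $(d\,\pi(d; m, a))_{a}$ and $y(n) \in \bC^{\what{\scU}_m}$ for the vector with $\chi_0$-entry $q^n - s_{m,n}$ and $\chi$-entry $c_n(\chi)$ for $\chi \ne \chi_0$. Then \eqref{eqn:coeffsystem1}, which is just \eqref{eqn:dAsum} together with \eqref{eqn:dAtrivsum} and the definition \eqref{eqn:Achi} of $A_\chi$, says
\[
y(n) = \sum_{d \mid n} Z(n/d)\, x(d) \quad \text{for all } n \ge 1 .
\]
For the trivial character, note that $\chi_0^{n/d} = \chi_0$ and $A_{\chi_0}(d) = \sum_a \pi(d;m,a)$. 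For $\chi \ne \chi_0$, the power $\chi^{n/d}$ may be trivial, but \eqref{eqn:dAsum} already handles this correctly because $Z(n/d)$ has $(\chi,a)$-entry $\chi^{n/d}(a)$.

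First, I check that $\widetilde{Z}(n)$ is well defined. The matrix $Z(1)$ is the character table of the finite abelian group $\scU_m$, which is square and invertible by orthogonality of characters; explicitly $Z(1)^{-1}_{a,\chi} = \overline{\chi(a)}/M'$. Hence \eqref{eqn:Ztilde_one} and \eqref{eqn:Ztilde_rec} determine $\widetilde{Z}(n)$ uniquely by induction on $n$, via $\widetilde{Z}(n) = -\sum_{d\mid n, d<n} \widetilde{Z}(d) Z(n/d)\, Z(1)^{-1}$.

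Second, I perform the Dirichlet convolution. Fix $N$ and compute
\[
\sum_{d \mid N} \widetilde{Z}(d)\, y(N/d) = \sum_{d \mid N} \sum_{e \mid N/d} \widetilde{Z}(d) Z\!\left(\tfrac{N}{de}\right) x(e) = \sum_{e \mid N} \Bigl(\sum_{d \mid N/e} \widetilde{Z}(d) Z\!\left(\tfrac{N/e}{d}\right)\Bigr) x(e).
\]
The inner bracket is $\id$ when $N/e = 1$ and $0$ otherwise, by \eqref{eqn:Ztilde_one} and \eqref{eqn:Ztilde_rec}. So the sum collapses to $x(N)$. The only point needing care is that the matrices act on the correct side, since they need not commute. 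Here $\widetilde{Z}(d)$ multiplies on the left and the defining relation has $\widetilde{Z}$ on the left of $Z$, so the regrouping is legitimate; this is the step I would double-check, and it is the main (mild) obstacle.

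Finally, divide by $N$ and take the $a$-th coordinate. The result is
\[
\pi(N;m,a) = \frac{1}{N}\sum_{d\mid N}\sum_{\chi} \widetilde{Z}(d)_{a,\chi}\, y(N/d)_\chi .
\]
Separating the $\chi_0$ term, which contributes $q^{N/d} - s_{m,N/d}$, and substituting $c_{N/d}(\chi) = -\sum_j \alpha_j(\chi)^{N/d}$ from \eqref{eqn:cn} gives \eqref{eqn:picformula1}. For the nontrivial characters, \eqref{eqn:cn} relies on the factorization of $\cL(u,\chi)$ into inverse zeros, and that factorization holds because $\cL(u,\chi)$ is a polynomial with constant term $1$.
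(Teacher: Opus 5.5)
Your proposal is correct and follows the paper's route: you encode \eqref{eqn:dAsum} and \eqref{eqn:dAtrivsum} as the vector identity \eqref{eqn:coeffsystem1}, convolve with the M\"obius inverses $\widetilde{Z}(d)$ from \eqref{eqn:Ztilde_one}--\eqref{eqn:Ztilde_rec}, and read off the $a$-th coordinate using $c_n(\chi) = -\sum_j \alpha_j(\chi)^n$. You also supply two details the paper leaves implicit: the invertibility of the character table $Z(1)$, which makes $\widetilde{Z}(n)$ well defined, and the explicit reindexing $\sum_{d\mid N}\widetilde{Z}(d)\,y(N/d) = \sum_{e\mid N}\bigl(\sum_{d\mid N/e}\widetilde{Z}(d)\,Z(\tfrac{N/e}{d})\bigr)x(e)$ with the noncommuting matrices kept in the correct order.
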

Now, the main result of this section is a formula for \(\widetilde{Z}(n)\) (Proposition \ref{thm:Ztilde}).
\begin{theorem}
    \label{thm:Ztilde}
    The $(a, \chi)$-th entry of $\widetilde{Z}(n)$ is given by
    \begin{equation}
    \label{eqn:Ztilde}
        \widetilde{Z}(n)_{a, \chi} = \frac{\mu(n)}{M'} \sum_{\substack{b \in \scU_m \\ b^n = a}} \chi(b)^{-1}.
    \end{equation}
\end{theorem}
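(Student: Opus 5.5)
The plan is to verify directly that the proposed matrices satisfy the defining recursion \eqref{eqn:Ztilde_one}--\eqref{eqn:Ztilde_rec}, and then to appeal to uniqueness. For uniqueness, note that $Z(1)$ is the character table of the finite abelian group $\scU_m$, viewed as a square matrix with rows indexed by $\what{\scU}_m$ and columns by $\scU_m$. By orthogonality of characters it is invertible. Hence \eqref{eqn:Ztilde_one} determines $\widetilde{Z}(1)$ uniquely, and inductively \eqref{eqn:Ztilde_rec} determines each $\widetilde{Z}(n)$ uniquely by right-multiplying by $Z(1)^{-1}$. So it suffices to show that the matrices $W(n)$, with entries $W(n)_{a,\chi} := \frac{\mu(n)}{M'}\sum_{b^n = a}\chi(b)^{-1}$, satisfy $W(1)Z(1) = \id$ and $\sum_{d\mid n} W(d)Z(n/d) = 0$ for $n\ge 2$.

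The key computation is a closed form for the product $W(d)Z(e)$ for arbitrary $d,e\ge 1$. Its $(a,c)$-entry, for $a,c\in\scU_m$, is
\[
\frac{\mu(d)}{M'}\sum_{\substack{b\in\scU_m\\ b^d=a}}\ \sum_{\chi\in\what{\scU}_m}\chi(b)^{-1}\chi(c)^{e}
= \frac{\mu(d)}{M'}\sum_{\substack{b\in\scU_m\\ b^d=a}}\ \sum_{\chi\in\what{\scU}_m}\chi\bigl(b^{-1}c^{e}\bigr).
\]
I will swap the sums and apply the orthogonality relation $\sum_\chi \chi(x) = M'\,\delta_{x=1}$. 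The inner sum then forces $b = c^e$, so exactly one $b$ can contribute. It contributes precisely when $(c^e)^d = a$. This gives
\[
\bigl(W(d)Z(e)\bigr)_{a,c} = \mu(d)\,\delta_{c^{de}=a}.
\]

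Both required identities now follow at once. Taking $d=e=1$ gives $W(1)Z(1)_{a,c} = \delta_{c=a}$, which is \eqref{eqn:Ztilde_one}. For $n\ge 2$ and $de=n$, the indicator $\delta_{c^{n}=a}$ does not depend on the choice of divisor $d$. Therefore
\[
\Bigl(\sum_{d\mid n}W(d)Z(n/d)\Bigr)_{a,c} = \delta_{c^n=a}\sum_{d\mid n}\mu(d) = 0,
\]
which is \eqref{eqn:Ztilde_rec}. By the uniqueness noted at the start, $W(n)=\widetilde{Z}(n)$ for all $n\ge 1$.

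I do not expect a genuine obstacle. The only point needing care is the bookkeeping of the index conventions, namely that $Z(e)$ has $(\chi,c)$-entry $\chi^e(c)=\chi(c^e)$ and that $\widetilde{Z}(n)$ is a left inverse. Getting these right is what makes the sum over $b$ collapse to the single element $b=c^e$, rather than leaving a more complicated count of roots.
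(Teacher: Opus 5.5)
Your proposal is correct and takes essentially the same route as the paper. Both arguments expand $\sum_{d\mid n}\widetilde{Z}(d)Z(n/d)$ entrywise, use orthogonality of characters to force $b=c^{n/d}$, and conclude from $\sum_{d\mid n}\mu(d)=0$ together with uniqueness of the recursive definition. Your version is slightly more complete: you justify uniqueness through the invertibility of the character table $Z(1)$, and you check the $n=1$ case explicitly via the closed form $\bigl(W(d)Z(e)\bigr)_{a,c}=\mu(d)\,\delta_{c^{de}=a}$, whereas the paper only asserts both points.
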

\begin{proof}
The $n = 1$ case follows from \eqref{eqn:Ztilde_one}.
For $n \ge 2$, since $\widetilde{Z}(n)$ is uniquely determined by the recurrence relation \eqref{eqn:Ztilde_rec}, it suffices to show that
\begin{align*}
    \left(\sum_{d \mid n} \widetilde{Z}(d) Z\left(\frac{n}{d}\right)\right)_{(c_1, c_2)} = \sum_{\chi \in \what{\scU}_m}\sum_{d \mid n} \widetilde{Z}(d)_{c_1, \chi} Z\left(\frac{n}{d}\right)_{\chi, c_2} = 0
\end{align*}
for all $c_1, c_2 \in \scU_m$, assuming \eqref{eqn:Ztilde}.
Expanding this gives
\begin{equation}
\label{eqn:Ztilde_eq1}
    \sum_{\chi \in \what{\scU}_m} \sum_{d \mid n} \frac{\mu(d)}{M'} \sum_{\substack{b \in \scU_m \\ b^d = c_1}} \chi(b)^{-1} \chi(c_2)^{n/d} = \frac{1}{M'} \sum_{d \mid n} \mu(d) \sum_{\substack{b \in \scU_m \\ b^d = c_1}} \sum_{\chi \in \what{\scU}_m} \chi(b^{-1} c_2^{n/d})
\end{equation}
By the orthogonality of characters, the innermost sum on the right-hand side of \eqref{eqn:Ztilde_eq1} is
\[
\sum_{\chi \in \what{\scU}_m} \chi(b^{-1} c_2^{n/d}) = \begin{cases}
    M' & b = c_2^{n/d} \\ 0 & b \ne c_2^{n/d}
\end{cases}
\]
and plugging this in \eqref{eqn:Ztilde_eq1} gives
\begin{align*}
    \sum_{\chi \in \what{\scU}_m} \sum_{d \mid n} \frac{\mu(d)}{M'} \sum_{\substack{b \in \scU_m \\ b^d = c_1}} \chi(b)^{-1} \chi(c_2)^{n/d} &= \frac{1}{M'} \sum_{d \mid n} \mu(d) M' \delta_{c_1, c_2^n}  = \delta_{c_1, c_2^n} \sum_{d \mid n} \mu(d) = 0.
\end{align*}
\end{proof}

\begin{corollary}
    \label{cor:Ztilde}
    \begin{enumerate}
        \item If \(\gcd(n, M') = 1\), then
        \begin{equation}
        \label{eqn:Ztildecoprime}
            \widetilde{Z}(n) = \frac{\mu(n)}{M'} Z(n^{-1})^{H}, \quad (\widetilde{Z}(n))_{a, \chi} = \frac{\mu(n)}{M'} \chi(a)^{-n^{-1}}
        \end{equation}
        where \(n^{-1}\) is the inverse of \(n\) modulo \(M'\).
        \item For a prime \(\ell\) dividing \(M'\), we have
        \begin{equation}
        \label{eqn:Ztildeprimediv}
            \widetilde{Z}(\ell)_{a, \chi} = -\frac{\ell}{M'} \times \begin{cases} \chi(a^{1/\ell})^{-1} & a \in \scU_m^\ell, \, \chi \in \what{\scU}_m^\ell \\ 0 & \text{otherwise} \end{cases}
        \end{equation}
        where $\scU_m^\ell \subset \scU_m$ (resp. $\what{\scU}_m^\ell \subset \what{\scU}_m$) is image of the $\ell$-power map.
        Note that $\chi(a^{1/\ell})$ does not depend on the choice of $\ell$-th root $a^{1/\ell}$ of $a$.
        \item Assume $\scU_m$ is cyclic. Let $g = \gcd(n, M')$. We have
        \begin{equation}
        \label{eqn:Ztildecyclic}
            \widetilde{Z}(n)_{a, \chi} = \frac{\mu(n)g}{M'} \times \begin{cases} 
            \chi(a^{1/g})^{-(n/g)^{-1}} & a \in \scU_m^g, \, \chi \in \what{\scU}_m^g \\ 0 & \text{otherwise}
            \end{cases}
        \end{equation}
        where $\scU_m^g \subset \scU_m$ (resp. $\what{\scU}_m^g \subset \what{\scU}_m$) is image of the $g$-power map, and $(n/g)^{-1}$ is the inverse of $n/g$ modulo $M'$.
        Note that $\widetilde{Z}(n)_{a, \chi}$ does not depend on the choice of the $g$-th root $a^{1/g}$ of $a$.
    \end{enumerate}
\end{corollary}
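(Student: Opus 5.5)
All three parts will come out of Theorem \ref{thm:Ztilde}, which gives
\[
\widetilde{Z}(n)_{a,\chi} = \frac{\mu(n)}{M'} \sum_{b^n = a} \chi(b)^{-1}.
\]
In each case the task is to describe the fibre $\{b \in \scU_m : b^n = a\}$ of the $n$-th power map and then evaluate the character sum over that fibre. The kernel of the power map governs everything.

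For part (1), $\gcd(n, M') = 1$ means the $n$-th power map is an automorphism of $\scU_m$. Its inverse is the $n^{-1}$-th power map, with $n^{-1}$ taken modulo $M'$, so the fibre is the single element $b = a^{n^{-1}}$. This gives $(\widetilde{Z}(n))_{a,\chi} = \frac{\mu(n)}{M'}\chi(a)^{-n^{-1}}$. The entries of $Z(n^{-1})$ are $\chi^{n^{-1}}(a)$, and conjugating them gives the same values since character values are roots of unity. Hence $\widetilde{Z}(n)$ equals $\frac{\mu(n)}{M'}$ times the conjugate transpose of $Z(n^{-1})$.

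For part (2), with $\ell$ a prime dividing $M'$, the fibre is empty unless $a \in \scU_m^\ell$; in that case the entry is $0$. When $a \in \scU_m^\ell$, the fibre is a coset $b_0 K$, where $K = \ker(x \mapsto x^\ell)$ and $b_0$ is any root. The sum then equals $\chi(b_0)^{-1}\sum_{k \in K}\chi(k)^{-1}$. By orthogonality on the finite abelian group $K$, this inner sum is $|K|$ if $\chi|_K$ is trivial and $0$ otherwise.

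Two facts finish part (2):
\begin{enumerate}
\item The restriction $\chi|_K$ is trivial exactly when $\chi \in \what{\scU}_m^\ell$. Duality identifies the annihilator of $K = \ker(\ell)$ with the image of $\ell$ on $\what{\scU}_m$. Concretely, $\chi = \psi^\ell$ kills $K$, and the counts match: both the annihilator and the image have size $M'/|K|$, using $|\what{\scU}_m^\ell| = |\scU_m^\ell| = M'/|K|$.
\item The phrasing of the statement requires $|K| = \ell$. This holds for cyclic $\scU_m$, but in general $|K|$ is $\ell^{r}$, where $r$ is the $\ell$-rank. I will therefore verify carefully how the intended normalization reads. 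The honest derivation gives the factor $-|K|/M'$, which equals $-\ell/M'$ when the $\ell$-part of $\scU_m$ is cyclic.
\end{enumerate}
Independence of the choice of root is the same fact: $\chi$ is trivial on $K$, and any two roots differ by an element of $K$.

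For part (3), take $\scU_m$ cyclic and $g = \gcd(n, M')$. I will factor the $n$-th power map as $x \mapsto (x^{n/g})^g$. Since $\gcd(n/g, M')$ divides $\gcd(n, M')/g$, and $\gcd(n/g, M'/g) = 1$, the number $n/g$ need not be coprime to $M'$ itself. So instead I factor $n = g n'$ with the power map $x\mapsto x^{n}$ having the same image and kernel as $x\mapsto x^g$ in a cyclic group, because $\gcd(n,M')=g$. In a cyclic group of order $M'$, the kernel of $x \mapsto x^n$ has size $g$ and the image is $\scU_m^g$.

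The character sum then runs as in part (2). The fibre is a coset of $K = \ker(g)$, with $|K| = g$, and the sum is $g\,\chi(b)^{-1}$ if $\chi \in \what{\scU}_m^g$ and $0$ otherwise.

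It remains to express a root $b$ of $b^n = a$ through $a^{1/g}$. Choose an integer $t$ with $t \equiv (n/g)^{-1}$ modulo $M'/g$, which is possible since $\gcd(n/g, M'/g) = 1$. Then $b = (a^{1/g})^{t}$ satisfies $b^n = (a^{1/g})^{g\cdot (n/g) t} = a \cdot (\text{element of } K\text{-twisted order dividing } M'/g)$. The exponent check needs care, and this is the main obstacle: interpreting $(n/g)^{-1}$ correctly, whose natural meaning is modulo $M'/g$, and confirming that $\chi(a^{1/g})^{-t}$ is well defined. Well-definedness follows because $\chi \in \what{\scU}_m^g$ has order dividing $M'/g$, so $\chi(a^{1/g})$ is an $(M'/g)$-th root of unity. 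Changing $a^{1/g}$ by an element of $K$ does not change $\chi(a^{1/g})$, and changing $t$ by a multiple of $M'/g$ does not change the value either.
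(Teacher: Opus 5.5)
Your strategy is the paper's: read each entry off Theorem~\ref{thm:Ztilde} by describing the fibre $\{b : b^n = a\}$ of the power map. Part (1) is argued exactly as the paper does.

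In part (2) your version is more complete than the paper's. The paper only checks that $\chi(a^{1/\ell})$ is independent of the root. It never shows that the sum vanishes when $a \in \scU_m^\ell$ but $\chi \notin \what{\scU}_m^\ell$; your orthogonality argument on $K$, together with the annihilator count, supplies this.

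Your worry about $|K|$ is well founded, and you should settle it rather than leave it as something to verify. The paper's proof asserts that $\scU_m^\ell$ has index $\ell$. That holds only when the $\ell$-primary part of $\scU_m$ is cyclic, and \eqref{eqn:Ztildeprimediv} is false otherwise. For $m = T(T+1) \in \bF_3[T]$ one has $\scU_m \cong (\bZ/2\bZ)^2$, $M' = 4$ and $\scU_m^2 = \{1\}$. Theorem~\ref{thm:Ztilde} then gives $\widetilde{Z}(2)_{1,\chi_0} = -\tfrac14 \cdot 4 = -1$, whereas the stated formula gives $-\tfrac12$. The correct general factor is $-|K|/M'$, as you found. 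Under the hypothesis that the $\ell$-Sylow subgroup is cyclic (automatic when $\scU_m$ is cyclic, which covers every example in the paper), your argument proves (2) as stated.

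In part (3) two loose ends need tightening.

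First, the claim that $\gcd(n/g, M')$ divides $\gcd(n, M')/g$ is wrong, since the right side is $1$. The worry that $n/g$ need not be invertible modulo $M'$ is also moot. Indeed $\mu(n) = 0$ unless $n$ is squarefree. For squarefree $n$, a prime $p$ dividing both $n/g$ and $M'$ would also divide $g$, forcing $p^2 \mid n$. So $\gcd(n/g, M') = 1$ whenever the entry is nonzero, which is the fact the paper uses without comment. The inverse modulo $M'$ is then also an inverse modulo $M'/g$, so it is a valid choice of your $t$.

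Second, the exponent check you call the main obstacle is one line. If $c^g = a$ and $(n/g)t = 1 + sM'/g$, then $(c^t)^n = a^{(n/g)t} = a\,(a^{M'/g})^s = a$, because $a$ lies in $\scU_m^g$, a group of order $M'/g$. Thus $b = (a^{1/g})^t$ is a genuine root of $b^n = a$. Your coset computation then yields \eqref{eqn:Ztildecyclic}, including the vanishing for $\chi \notin \what{\scU}_m^g$, which the paper again leaves implicit.
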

\begin{proof}
\begin{enumerate}
    \item When $n$ and $M'$ are coprime, the $n$-power map $x \mapsto x^n$ is an isomorphism from $\scU_m$ to itself.
    Thus for a given $a \in \scU_m$ and $\chi \in \what{\scU}_m$, there exists a unique $b \in \scU_m$ with $b^n = a$, and $a^{n^{-1}} = (b^n)^{n^{-1}} = b^{n \cdot n^{-1}} = b$.
    Then \eqref{eqn:Ztilde} gives
    \[
        \widetilde{Z}(n)_{a, \chi} = \frac{\mu(n)}{M'} \chi(b)^{-1} = \frac{\mu(n)}{M'} \chi(b^{-1}) = \frac{\mu(n)}{M'} \chi(a^{-n^{-1}}) = \frac{\mu(n)}{M'} \chi(a)^{-n^{-1}}.
    \]
    \item When a prime $\ell$ divides $M'$, $\scU_m^\ell \subset \scU_m$ is an index-$\ell$ subgroup of $\scU_m$, and for each $a \in \scU_m^\ell$, there are $\ell$ elements $b \in \scU_m$ with $b^\ell = a$.
    If $b_1^\ell = b_2^\ell = a$ and $\chi = \chi_1^\ell \in \what{\scU}_m^\ell$, $(b_1^{-1} b_2)^{\ell} = 1$ and $\chi(b_1^{-1} b_2) = \chi_1(b_1^{-1}b_2)^\ell = 1$, so $\chi(b_1) = \chi(b_2)$.
    Hence $\chi(a^{1/\ell})$ does not depend on the choice of $a^{1/\ell}$ and $\mu(\ell) = -1$ with \eqref{eqn:Ztilde} gives \eqref{eqn:Ztildeprimediv}.
    \item The proof is similar.
    First, if $a = b^n$ for some $b \in \scU_m$, then $a = (b^{n/g})^g \in \scU_m^g$ and $\widetilde{Z}(n)_{a, \chi} = 0$ for $a \not \in \scU_m^g$.
    Since $\scU_m$ is cyclic, the kernel of the $g$-power map has exactly $g$ elements, and there are $g$ elements $c \in \scU_m$ with $c^g = a$.
    For each $c$, since $\gcd(n/g, M') = 1$, there exists a unique $b$ with $b^{n/g} = c$ (namely $b = c^{(n/g)^{-1}}$), so there are $g$ terms in the sum \eqref{eqn:Ztilde}, and \eqref{eqn:Ztildecyclic} follows.
\end{enumerate}
\end{proof}

The formula \eqref{eqn:picformula1} can be simplified further when $\scU_m$ (and hence $\what{\scU}_m$) is cyclic.

\begin{theorem}
    \label{thm:picformula_cyclic}
    Let $N \ge 1$, let $m \in \bF_q[T]$, and assume that $\scU_m = (\bF_q[T] / m \bF_q[T])^\times$ is a cyclic group generated by $c$, with $M' = \Phi(m) = |\scU_m|$.
    Then for $k \in \bZ$, we have
    \begin{equation}
        \label{eqn:piformula_cyclic}
        \pi(N; m, c^k) = \frac{1}{M'N} \sum_{g \mid M'} g \delta_{g \mid k} \sum_{\substack{d \mid N \\ \gcd(d, M') = g}} \mu(d) \left(q^{\frac{N}{d}} - s_{m, \frac{N}{d}} + \sum_{j=1}^{\frac{M'}{g} - 1} \zeta_{M'}^{-kj\left(\frac{d}{g}\right)^{-1}} c_{\frac{N}{d}}(\chi_1^{gj})\right)
    \end{equation}
\end{theorem}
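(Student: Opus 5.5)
We substitute Corollary \ref{cor:Ztilde}(3) into Theorem \ref{thm:picformula} with $a = c^k$. We also need to parametrize the characters. Let $\chi_1$ be the generator of $\what{\scU}_m$ with $\chi_1(c) = \zeta_{M'}$, so that every character is $\chi_1^i$ for $0 \le i < M'$ and $\chi_1^0 = \chi_0$.

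Fix a divisor $d \mid N$ and put $g = \gcd(d, M')$; grouping the divisors $d$ by $g$ gives the outer double sum in \eqref{eqn:piformula_cyclic}. We then treat the two membership conditions in \eqref{eqn:Ztildecyclic}.
\begin{itemize}
\item Since $\scU_m$ is cyclic of order $M'$ and $g \mid M'$, we have $\scU_m^g = \langle c^g\rangle$. Hence $c^k \in \scU_m^g$ if and only if $g \mid k$, which produces the factor $\delta_{g\mid k}$.
\item Likewise $\what{\scU}_m^g = \langle \chi_1^g\rangle$, so the characters giving a nonzero entry are exactly $\chi_1^{gj}$ for $0 \le j < M'/g$.
\end{itemize}

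For $g \mid k$ we take the $g$-th root $a^{1/g} = c^{k/g}$. Then
\[
\widetilde{Z}(d)_{c^k, \chi_1^{gj}} = \frac{\mu(d) g}{M'} \chi_1^{gj}(c^{k/g})^{-(d/g)^{-1}} = \frac{\mu(d) g}{M'} \zeta_{M'}^{-kj (d/g)^{-1}}.
\]
Here $(d/g)^{-1}$ is an inverse modulo $M'$, which exists because $\gcd(d/g, M'/g) = 1$. Some care is needed at this point: $\gcd(d/g, M')$ might exceed $1$. However, the exponent only matters modulo $M'/g$, since the root of unity is really $\zeta_{M'/g}^{-(k/g)j(d/g)^{-1}}$. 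So it suffices to interpret $(d/g)^{-1}$ as an inverse modulo $M'/g$, and the expression is then well defined.

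Collecting terms:
\begin{itemize}
\item The $j=0$ term is the trivial character. Its entry is $\mu(d)g/M'$, and it multiplies $q^{N/d} - s_{m,N/d}$.
\item Each $j \ge 1$ term multiplies $-\sum_i \alpha_i(\chi_1^{gj})^{N/d}$, which by \eqref{eqn:cn} equals $c_{N/d}(\chi_1^{gj})$.
\end{itemize}
Summing over $d \mid N$, grouping by $g$, and pulling out the common factor $g/(M'N)$ gives exactly \eqref{eqn:piformula_cyclic}.

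The main obstacle is the bookkeeping: checking that the $g$-th root choice and the modular inverse of $d/g$ are consistent, and that the nontrivial characters $\chi_1^{gj}$ (which are distinct from $\chi_0$ for $1 \le j < M'/g$) do not need the $s_{m,\cdot}$ correction. That correction applies only to $\chi_0$ and enters through \eqref{eqn:dAtrivsum}. Everything else is a direct substitution.
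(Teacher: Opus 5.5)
Your proposal is correct and follows the paper's proof. Like the paper, you group the divisors $d \mid N$ by $g = \gcd(d, M')$, use $\scU_m^g = \langle c^g\rangle$ and $\what{\scU}_m^g = \langle \chi_1^g\rangle$ to produce $\delta_{g \mid k}$ and restrict to the characters $\chi_1^{gj}$, and substitute Corollary \ref{cor:Ztilde}(3) into Theorem \ref{thm:picformula}; your explicit normalization $\chi_1(c) = \zeta_{M'}$ is the one the paper uses implicitly. Your reading of $(d/g)^{-1}$ modulo $M'/g$ is valid, but it is simpler to note that whenever $\mu(d) \neq 0$, squarefreeness of $d$ forces $\gcd(d/g, M') = 1$, so the inverse modulo $M'$ (and Corollary \ref{cor:Ztilde}(3) itself) is available in every term that contributes.
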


\begin{proof}
Since $\scU_m$ is cyclic, the character group $\what{\scU}_m$ is also cyclic, and we denote by $\chi_1$ a generator of $\what{\scU}_m$.
We can decompose $\pi(N; m, a)$ as
\begin{equation}
\label{eqn:pi_decomp_pig}
    \pi(N; m, a) = \sum_{g \mid M'} \pi_g(N; m, a)
\end{equation}
where
\begin{align}
\label{eqn:pig_def}
    \pi_g(N; m, a) = \frac{1}{N} \sum_{\substack{d \mid N \\ \gcd(d, M') = g}} \left( \widetilde{Z}(d)_{a, \chi_0}(q^{\frac{N}{d}} - s_{m, \frac{N}{d}}) + \sum_{\chi \ne \chi_0} \widetilde{Z}(d)_{a, \chi} c_{\frac{N}{d}}(\chi)\right).
\end{align}
If $a = c^k$ with $0 \le k < M'$, the sum simply vanishes unless $g \mid k$.
For the latter case, only powers of $\chi_1^g$ contribute to the inner sum over nontrivial characters.
Then Corollary \ref{cor:Ztilde} gives
\begin{align}
    \pi_g(N; m, c^k) &= \frac{g \delta_{g \mid k}}{M'N} \sum_{\substack{d \mid N \\ \gcd(d, M') = g}} \mu(d) \left(q^{\frac{N}{d}} - s_{m, \frac{N}{d}} + \sum_{j=1}^{\frac{M'}{g} - 1} \chi_1^{gj}(c^{\frac{k}{g}})^{-\left(\frac{d}{g}\right)^{-1}} c_{\frac{N}{d}}(\chi_1^{gj})\right) \nonumber \\
    &= \frac{g \delta_{g \mid k}}{M'N} \sum_{\substack{d \mid N \\ \gcd(d, M') = g}} \mu(d) \left(q^{\frac{N}{d}} - s_{m, \frac{N}{d}} + \sum_{j=1}^{\frac{M'}{g} - 1} \zeta_{M'}^{-kj\left(\frac{d}{g}\right)^{-1}} c_{\frac{N}{d}}(\chi_1^{gj})\right) \label{eqn:pig_cyclic}
\end{align}
and summing this over all $g \mid M'$ proves \eqref{eqn:piformula_cyclic}.
\end{proof}

If $\gcd(N, M') = 1$, then $\gcd(d, M') = 1$ for all $d \mid N$ and
\begin{equation}
    \label{eqn:pi_coprime_deg}
    \pi(N; m, c^k) = \pi_1(N; m, c^k) = \frac{1}{M'N} \sum_{d \mid N} \mu(d)\left(q^{\frac{N}{d}} - s_{m, \frac{N}{d}} + \sum_{j=1}^{M' - 1} \zeta_{M'}^{-kjd^{-1}} c_{\frac{N}{d}}(\chi_1^{j})\right).
\end{equation}

Also, we record the following lemma, which will be used later in examples.
\begin{lemma}
\label{lem:mobius_sum_pnmid}
    Let $p$ be a prime. If $N \ne 1$ and $N \ne p$, then $\sum_{p \nmid d \mid N} \mu(d) = 0$.
\end{lemma}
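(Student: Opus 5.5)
The plan is to use the standard identity $\sum_{d \mid N} \mu(d) = \delta_{N,1}$ and to isolate the divisors of $N$ that are divisible by $p$. First split the full Möbius sum according to whether $p \mid d$:
\[
\sum_{d \mid N} \mu(d) = \sum_{p \nmid d \mid N} \mu(d) + \sum_{p \mid d \mid N} \mu(d).
\]
Because $N \ne 1$, the left-hand side vanishes. The statement is therefore equivalent to showing that the sum over divisors divisible by $p$ also vanishes.

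If $p \nmid N$, the second sum is empty, so the first sum equals the full sum, which is $0$. This case is immediate.

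If $p \mid N$, write each divisor $d$ with $p \mid d$ as $d = pe$ with $e \mid N/p$. We have $\mu(pe) = -\mu(e)$ when $p \nmid e$, and $\mu(pe) = 0$ otherwise. This gives
\[
\sum_{p \mid d \mid N} \mu(d) = -\sum_{p \nmid e \mid N/p} \mu(e).
\]
Set $N = p^a N_0$ with $a \ge 1$ and $p \nmid N_0$. The divisors of $N/p$ that are coprime to $p$ are exactly the divisors of $N_0$. Likewise, the divisors of $N$ coprime to $p$ are exactly the divisors of $N_0$. So both sums reduce to $\sum_{e \mid N_0} \mu(e) = \delta_{N_0,1}$.

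It is cleaner to compute the original sum directly: $\sum_{p \nmid d \mid N} \mu(d) = \sum_{d \mid N_0} \mu(d) = \delta_{N_0, 1}$, which holds in both cases. The sum is nonzero only when $N_0 = 1$, that is, when $N$ is a power of $p$.

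At this point the main obstacle appears. If $N = p^a$ with $a \ge 2$, the sum equals $1$, so the statement as literally written fails for $N = p^2$. I would therefore check how the lemma is used later. The likely intended hypothesis is that $N$ is not a power of $p$, or that the relevant $N$ are restricted to residue classes where this does not arise. I would prove the corrected statement, $\sum_{p \nmid d \mid N} \mu(d) = \delta_{N_0,1}$, and note that it gives $0$ whenever $N$ is not a power of $p$, in particular for all $N \ne 1, p$ with $N \ne p^a$.
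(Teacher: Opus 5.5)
Your analysis is correct, and the statement as written is false. For $N = p^a$ with $a \ge 2$, the only divisor of $N$ prime to $p$ is $1$, so the sum equals $1$; this already happens for $p=2$, $N=4$.

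The paper's proof starts from the same splitting as yours, $\sum_{p\nmid d\mid N}\mu(d) = \delta_{N=1} - \delta_{p\mid N}\sum_{d\mid N/p}\mu(pd)$. It then replaces $\sum_{d\mid N/p}\mu(pd)$ by $-\sum_{d\mid N/p}\mu(d) = -\delta_{N=p}$. That step uses $\mu(pd) = -\mu(d)$ for every $d \mid N/p$, including those with $p \mid d$, where in fact $\mu(pd)=0$. This is exactly the step you handled correctly. It is why the paper ends with $\delta_{N=1} + \delta_{N=p}$ instead of the true value $\delta_{N_0,1}$, where $N = p^a N_0$ and $p\nmid N_0$. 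Your direct evaluation $\sum_{p\nmid d\mid N}\mu(d) = \sum_{d\mid N_0}\mu(d)$ is the right proof of the right statement: the sum vanishes if and only if $N$ is not a power of $p$.

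You were also right to check how the lemma is used.

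In the remark on $m = T^2 \in \mathbb{F}_2[T]$, the claim $\pi_1(N;m,1) = \pi_1(N;m,T+1)$ for even $N>2$ fails when $N$ is a power of $2$. For $N=4$ the displayed formulas give $7/4$ and $2$. The evaluation of $\pi_2(N;m,1)$ there also repeats the slip $\mu(2d) = -\mu(d)$ whenever $4 \mid N$. Correcting both gives $\pi(N;m,1) - \pi(N;m,T+1) = -\frac{1}{N}\sum_{e\mid N_0}\mu(e)\,2^{N/(2e)}$ with $N = 2^a N_0$. This is still negative, so the stated bias survives, and it matches the table. For example, at $N=16$ it gives $2032-2048=-16$, whereas the paper's uncorrected expression gives $-15$.

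In the case $m = T^2 \in \mathbb{F}_3[T]$ with $N$ odd, the lemma is only used to simplify $\pi_1(N;m,1)$. The tie with $(T+2)^3$ holds for every odd $N$ without it. The $\pi_1$ summands for $k=0$ and $k=3$ both equal $3^{N/d}-3$ when $N/d$ is odd, and the $\pi_3$ parts coincide.
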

\begin{proof}
    This follows from
    \begin{align*}
        \sum_{p \nmid d \mid N} \mu(d) &= \sum_{d \mid N} \mu(d) - \sum_{p \mid d \mid N} \mu(d) = \delta_{N = 1} - \delta_{p \mid N}\sum_{d \mid \frac{N}{p}} \mu(pd) = \delta_{N = 1} + \delta_{p \mid N} \delta_{N = p}.
    \end{align*}
\end{proof}

\subsection{Examples}

In the following examples, we show how the explicit formulas \eqref{eqn:pig_def} and \eqref{eqn:pig_cyclic} can be used to prove ties.

\subsubsection{\texorpdfstring{\(m = T^2 + T + 1 \in \bF_2[T]\)}{m = T^2 + T + 1 over characteristic 2}}
\label{subsubsec:p2T2T1_explicit}

Let \(\chi_1: (\bF_2[T] / m)^\times \to \bC^\times\) be the Dirichlet character with \(\chi_1(T) = \zeta_3\).
Then the \(L\)-functions of $\chi_1$ and $\chi_1^2$ are $\cL(u, \chi_1) = \cL(u, \chi_1^2) = 1 - u$ where $u = 1$ is the only zero and $c_n(\chi_1) = c_n(\chi_1^2) = -1$ for all $n$.

\paragraph{\textbf{When $N$ is not a multiple of 3:}}
When $N \equiv e \pmod{3}$ is not a multiple of 3, \eqref{eqn:pi_coprime_deg} gives 
\begin{align*}
\pi(N; m, T^k) &= \frac{1}{3N} \sum_{d \mid N} \mu(d) \left(2^{\frac{N}{d}} - 2\delta_{2 \mid \frac{N}{d}} - (\zeta_3^{-kd^{-1}} + \zeta_3^{-2kd^{-1}})\right) \\
&= \frac{1}{3N} \sum_{d \mid N} \mu(d) \left(2^{\frac{N}{d}} - 2\delta_{2 \mid \frac{N}{d}} - \underbrace{(\zeta_3^{-ke^{-1}\frac{N}{d}} + \zeta_3^{-2ke^{-1}\frac{N}{d}})}_{(*)}\right) \\
\end{align*}
where $e^{-1} \cdot e \equiv 1 \pmod{3}$. The quantity $(*)$ depends only on $k$ (and not on $e$, $N$, or $d$):
\[
(*) = \begin{cases}
    2 & k \equiv 0 \pmod{3} \\ -1 & k \not\equiv 0 \pmod{3}
\end{cases}
\]
Since $\sum_{d \mid N} \mu(d) = 0$ for $N > 1$, the sum of $\mu(d) \cdot (*)$ over $d \mid N$ vanishes, and this proves that $\pi(N; m, T^k)$ is the same for $k = 0, 1, 2$.

\paragraph{\textbf{When $N$ is a multiple of 3:}}
Assume that $N$ is a multiple of $3$.
Let $a = T^k \ne 1$, equivalently $k \not \equiv 0 \pmod{3}$.
Then $\pi_3(N; m, T^k) = 0$, and we get
\begin{equation}
\label{eqn:T2T1N3}
\pi(N; m, T^k) = \frac{1}{3N} \sum_{3 \nmid d \mid N} \mu(d) \left(2^{\frac{N}{d}} - 2\delta_{2 \mid \frac{N}{d}} - (\zeta_3^{-kd^{-1}} + \zeta_3^{-2kd^{-1}}) \right) = \frac{1}{3N} \sum_{3 \nmid d \mid N} \mu(d) (2^{\frac{N}{d}} - 2\delta_{2 \mid \frac{N}{d}}).
\end{equation}
Since the result is independent of $k$, we get $\pi(N; m, T) = \pi(N; m, T^2) = \pi(N; m, T + 1)$.

Table \ref{tab:T2T1group} summarizes the ties for each $N$ modulo 3.

\begin{remark}
\label{rmk:T2T1bias}
    For $N \equiv 0 \pmod{3}$, we also have a consistent \emph{bias} against $a = 1$.
    From \eqref{eqn:T2T1N3} and \eqref{eqn:gauss_cnt}, one can show that
    \begin{equation}
        \label{eqn:piT1diff}
        \pi(N; m, T) - \pi(N; m, 1) = - \frac{1}{N} \sum_{3 \mid d \mid N} \mu(d) (2^{\frac{N}{d}} - (-1)^{\frac{N}{d}}) - \frac{1}{N} \sum_{d \mid N} \mu(d) (-1)^{\frac{N}{d}}.
    \end{equation}
    Now, we have the following lemma:
    \begin{lemma}
    For $N \ge 1$,
    \begin{equation}
        \label{eqn:mobiussumalt}
        \sum_{d \mid N} \mu(d) (-1)^{\frac{N}{d}} = \begin{cases}
            -1 & N = 1\\ 2 & N = 2 \\ 0 & N \ge 3
        \end{cases}
    \end{equation}
    \end{lemma}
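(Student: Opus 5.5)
The plan is to split off the $d=1$ and $d=2$ terms and reduce everything else to the basic identity $\sum_{d\mid n}\mu(d) = \delta_{n=1}$. The key observation is that $(-1)^{N/d}$ depends only on the parity of $N/d$, so we separate cases by the parity of $N$.

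\emph{Case $N$ odd.} Every divisor $d$ of $N$ is odd, so $N/d$ is odd and $(-1)^{N/d} = -1$ for all $d \mid N$. Hence the sum equals $-\sum_{d\mid N}\mu(d) = -\delta_{N=1}$. This gives $-1$ for $N=1$ and $0$ for odd $N\ge 3$.

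\emph{Case $N$ even.} Write $N = 2^e N_0$ with $e\ge 1$ and $N_0$ odd. Since $\mu(d)=0$ unless $d$ is squarefree, only divisors $d = d_0$ or $d = 2d_0$ with $d_0 \mid N_0$ contribute.
\begin{itemize}
\item For $d = d_0$ odd, $N/d$ is even, so the term is $\mu(d_0)$.
\item For $d = 2d_0$, $N/d = 2^{e-1}N_0/d_0$, which is even if $e\ge 2$ and odd if $e=1$. The term is $-\mu(d_0)$ times $(+1)$ or $(-1)$ respectively.
\end{itemize}
Thus the sum equals $\sum_{d_0\mid N_0}\mu(d_0)\,(1 \mp 1)$. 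When $e\ge2$ it is $0$. When $e=1$ it is $2\sum_{d_0\mid N_0}\mu(d_0) = 2\delta_{N_0=1}$, which gives $2$ for $N=2$ and $0$ for even $N\ge 6$ with $e=1$.

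Combining the two cases gives \eqref{eqn:mobiussumalt}. An alternative is to write $(-1)^{n} = 1 - 2\delta_{2\mid n}$ and apply Lemma~\ref{lem:mobius_sum_pnmid}-style manipulations, but the direct case split above is cleaner.

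There is no real obstacle here. The only point requiring care is the sign bookkeeping for the $d=2d_0$ terms, that is, the parity of $N/d$ depending on whether $4\mid N$.
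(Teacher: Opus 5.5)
Your proof is correct, but it takes a different route from the paper's. For odd $N$ every $N/d$ is odd, so the sum is $-\sum_{d\mid N}\mu(d)$. For $N = 2^e N_0$, pairing each odd squarefree $d_0$ with $2d_0$ gives cancellation when $e \ge 2$ and doubling when $e = 1$. This produces exactly the values $-1$, $2$, $0$.

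The paper instead uses Dirichlet series. It writes $\sum_{n\ge1}(-1)^n n^{-s} = (-1 + 2\cdot 2^{-s})\zeta(s)$ and multiplies by $1/\zeta(s) = \sum_{n\ge 1}\mu(n)n^{-s}$. This shows the Dirichlet series of the convolution is the finite expression $-1 + 2\cdot 2^{-s}$, and the values are read off as its coefficients. In arithmetic-function terms, this is the identity $(-1)^n = -1 + 2\delta_{2\mid n} = \sum_{d\mid n} f(d)$ with $f(1) = -1$, $f(2) = 2$, and $f = 0$ otherwise. M\"obius inversion then gives $\sum_{d\mid N}\mu(d)(-1)^{N/d} = f(N)$ directly.

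The paper's route needs no case split, and it extends at once to any function that is a finite combination of divisibility indicators $\delta_{k\mid n}$. Your argument is fully elementary and avoids Dirichlet series and the uniqueness of their coefficients. The cost is tracking the parity of $N/d$ through the $2$-adic valuation of $N$.

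The alternative you mention at the end is essentially the paper's proof, but its sign is wrong. It should be $(-1)^n = 2\delta_{2\mid n} - 1$, not $1 - 2\delta_{2\mid n}$; the latter equals $-(-1)^n$.
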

    \begin{proof}
    This immediately follows from
    \begin{align*}
        \sum_{n \ge 1} \frac{\mu(n)}{n^s} &= \frac{1}{\zeta(s)},\\
        \sum_{n \ge 1} \frac{(-1)^n}{n^s} &= -\frac{1}{1^s} + \frac{1}{2^s} - \frac{1}{3^s} + \frac{1}{4^s} - \cdots = 2\left(\frac{1}{2^s} + \frac{1}{4^s} + \cdots\right) - \zeta(s) = \left(-1 + \frac{2}{2^s}\right)\zeta(s)
    \end{align*}
    where \eqref{eqn:mobiussumalt} is the Dirichlet convolution of $\mu(n)$ and $(-1)^n$.
    \end{proof}
    Using \eqref{eqn:mobiussumalt}, we can simplify the difference \eqref{eqn:piT1diff} as
    \begin{align*}
        -\frac{1}{N} \sum_{3 \mid d \mid N} \mu(d)(2^{\frac{N}{d}} - (-1)^{\frac{N}{d}}) = -\frac{1}{N} \sum_{3 \mid d \mid N} \mu(d) 2^{\frac{N}{d}} + \frac{1}{N} \sum_{d \mid \frac{N}{3}} \mu(3d) (-1)^{\frac{N}{3d}}
    \end{align*}
    When $N = 3$ or $N = 6$, the difference is $1$ and $0$.
    When $N \ge 9$, the second summation vanishes by \eqref{eqn:mobiussumalt}, and
    \begin{align*}
        -\frac{1}{N} \sum_{3 \mid d \mid N} \mu(d) 2^{\frac{N}{d}}
        &\ge \frac{1}{N} (2^{\frac{N}{3}} - 2^{\frac{N}{3}-1} - 2^{\frac{N}{3}-2} - \cdots - 1) = \frac{1}{N} > 0.
    \end{align*}
    Thus we have $\pi(N; m, 1) \le \pi(N; m, T) = \pi(N; m, T + 1)$ for all $3 \mid N$, where the first inequality becomes equality only if $N = 6$.
\end{remark}

\subsubsection{\texorpdfstring{\(m = T^3 + T + 1 \in \bF_2[T]\)}{m = T^3 + T + 1 over characteristic 2}}
\label{subsubsec:p2T3T1}

Let \(\chi_1: (\bF_2[T] / m)^\times \to \bC^\times\) be the Dirichlet character with \(\chi_1(T) = \zeta_7 = e^{2\pi i / 7}\).
The corresponding \(L\)-function is
\[
\cL(u, \chi) = (1 - u)(1 - \alpha u), \quad \alpha = \zeta_7^2 + \zeta_7^4 + \zeta_7^5 + \zeta_7^6 = -1 - \zeta_7 - \zeta_7^3.
\]
We have \(|\alpha| = \sqrt{2}\), and $\alpha / \sqrt{2}$ is not a root of unity, which can be checked by computing its minimal polynomial.

\paragraph{\textbf{When $N$ is not a multiple of 7:}}
When $N \equiv e \pmod{7}$ is not a multiple of 7, \eqref{eqn:pi_coprime_deg} gives
\begin{align*}
    \pi(N; m, T^k) &= \frac{1}{7N} \sum_{d \mid N} \mu(d) \left(2^{\frac{N}{d}} - 3 \delta_{3 \mid \frac{N}{d}} - \sum_{l=1}^{6} \zeta_7^{-ke^{-1}l \frac{N}{d}} (1 + \sigma_l(\alpha)^{\frac{N}{d}})\right) \\
    &= \frac{1}{7N} \sum_{d \mid N} \mu(d) \left(2^{\frac{N}{d}} - 3 \delta_{3 \mid \frac{N}{d}} - \underbrace{\sum_{l=1}^{6} \zeta_7^{-ke^{-1}l \frac{N}{d}}}_{(*)}  - \Tr((\zeta_7^{-ke^{-1}} \alpha)^{\frac{N}{d}})\right)
\end{align*}
where $e^{-1} \cdot e \equiv 1 \pmod{7}$, $\Tr = \Tr_{\bQ(\zeta_7) / \bQ}$, and $(*)$ depends only on $k$:
\[
(*) = \begin{cases}
    6 & k \equiv 0 \pmod{7} \\ -1 & k \not\equiv 0 \pmod{7}
\end{cases}
\]
As in the previous example, the contribution of $(*)$ vanishes when $N > 1$.
Thus $\pi(N; m, T^k) = \pi(N; m, T^{k'})$ if
\[
\Tr((\zeta^{-ke^{-1}} \alpha)^{n}) = \Tr((\zeta^{-k'e^{-1}}\alpha)^{n})
\]
for all $n \mid N$.
Now, one can directly check that
\begin{equation}
    \label{eqn:T3T1conj}
\sigma_2(\alpha) = \zeta_7^4 + \zeta_7^8 + \zeta_7^{10} + \zeta_7^{12} = \zeta_{7} + \zeta_{7}^{3} + \zeta_{7}^{4} + \zeta_{7}^{5} = \zeta_{7}^{-1} \alpha,
\end{equation}
hence \(\alpha\) and \(\zeta_7^{-1} \alpha\) are conjugate to each other.
Similarly, \(\sigma_4(\alpha) = \zeta_7^{-3}\alpha\) shows that \(\zeta_7^{-3}\alpha\) is also conjugate to these two.
Thus traces of their powers are the same and
\[
\Tr(\alpha^\frac{N}{d}) = \Tr((\zeta_7^{-e \cdot e^{-1}} \alpha)^{\frac{N}{d}}) = \Tr((\zeta_7^{-3e \cdot e^{-1}} \alpha)^{\frac{N}{d}}),
\]
hence we get the ties
\[
\pi(N; m, 1) = \pi(N; m, T^e) = \pi(N; m, T^{3e}).
\]
Similarly, $\zeta_7^{-2} \alpha$, $\zeta_7^{-4} \alpha$, and $\zeta_{7}^{-5}\alpha$ are conjugate to each other, and this gives a second family of ties
\[
\pi(N; m, T^{2e}) = \pi(N; m, T^{4e}) = \pi(N; m, T^{5e}).
\]
When $e = 1$, the first group gives the tie \eqref{eqn:tieN1mod7} mentioned earlier.

\paragraph{\textbf{When $N$ is a multiple of 7:}}

Write $N = 7^r \cdot N_1$ for $r \ge 1$ and $7 \nmid N_1$.
Assume $7 \nmid k$.
Then $\pi_7(N; m, T^k) = 0$ and
\[
\pi(N; m, T^k) = \pi_1(N; m, T^k) = \frac{1}{7N} \sum_{7 \nmid d \mid N} \mu(d) \left(2^{\frac{N}{d}} - 3 \delta_{3 \mid \frac{N}{d}} - 6 - \Tr((\zeta_7^{-k N_1^{-1}} \alpha^{7^r})^{\frac{N_1}{d}})\right)
\]
where $N_1^{-1} \cdot N_1 \equiv 1 \pmod{7}$. From \eqref{eqn:T3T1conj},
\[
\sigma_2(\alpha^{7^r}) = \sigma_2(\alpha)^{7^r} = (\zeta_7^{-1} \alpha)^{7^r} = \alpha^{7^r},
\]
i.e. $\alpha^{7^r}$ is fixed under $\sigma_2$\footnote{We have $\alpha^7 = -\frac{13}{2} - \frac{7\sqrt{7}}{2}i$.}.
Hence for $t \not\equiv 0 \pmod{7}$, $\zeta_7^t \alpha^{7^r}$ and $\sigma_2(\zeta^{t} \alpha^{7^r}) = \zeta^{2t} \alpha^{7^r}$ are conjugate and we get the ties
\begin{align*}
    \pi(N; m, T) = \pi(N; m, T^2) = \pi(N; m, T^4), \quad \pi(N; m, T^3) = \pi(N; m, T^6) = \pi(N; m, T^5).
\end{align*}
These ties coincide with the patterns shown in Table \ref{tab:T3T1}.

\subsubsection{\texorpdfstring{\(m = T^2 + 1 \in \bF_3[T]\)}{m = T^2 + 1 over characteristic 3}}
\label{subsubsec:p3T21explicit}

Let $\chi_1 : (\bF_3[T] / m)^\times \to \bC^\times$ be the Dirichlet character with $\chi_1(T + 1) = \zeta_8 = e^{2\pi i / 8}$ (note that $T + 1$ is a generator of $(\bF_3[T] / m)^\times$, but $T$ is not).
For $1 \le l \le 7$, the $L$-functions of $\chi_1^l$ are
\begin{align*}
    \cL(u, \chi_1^l) = \begin{cases}
        1 - \sigma_l(\alpha)u & 2 \nmid l \\
        1 - u & 2 \mid l
    \end{cases}
\end{align*}
for $\alpha = \zeta_8^2 + \zeta_8^3 + \zeta_8^5 = -\sqrt{2} + i$, which has an irrational argument and $\alpha / \sqrt{3}$ is not a root of unity.

\paragraph{\textbf{When $N$ is odd:}}
For odd $N$, \eqref{eqn:pi_coprime_deg} gives
\begin{align*}
\pi(N; m, (T + 1)^k) &= \pi_1(N; m, (T+1)^k)\\
&= \frac{1}{8N} \sum_{d \mid N} \mu(d) \left(3^{\frac{N}{d}} - 2 \delta_{2 \mid \frac{N}{d}} - \sum_{l=1}^{7} \zeta^{-kld^{-1}} \alpha(\chi^l)^{\frac{N}{d}}\right) \\
&=\frac{1}{8N} \sum_{d \mid N} \mu(d) \left(3^{\frac{N}{d}} - \underbrace{\left( \zeta_8^{-2kd^{-1}} + \zeta_8^{-4kd^{-1}} + \zeta_8^{-6kd^{-1}}+ \sum_{i=0}^{3} \zeta_8^{-k(2i+1)d^{-1}} \sigma_{2i+1}(\alpha)^{\frac{N}{d}}\right)}_{(*)} \right)
\end{align*}
and we only need to focus on the terms in (*) to check for ties.
If $N \equiv e \pmod 8$, $(*)$ can be rewritten as
\[
\zeta_8^{-2kd^{-1}} + \zeta_8^{-4kd^{-1}} + \zeta_8^{-6kd^{-1}} + \Tr((\zeta_8^{-ke^{-1}} \alpha)^{\frac{N}{d}}).
\]
One can check that the sum of the first three terms is
\begin{equation}
    \label{eqn:p3T21_sum1}
    \zeta_8^{-2kd^{-1}} + \zeta_8^{-4kd^{-1}} + \zeta_8^{-6kd^{-1}} = \begin{cases} 3 & k \equiv 0 \pmod{4} \\ -1 & k \not\equiv 0 \pmod{4} \end{cases}
\end{equation}
We have the following conjugate relation:
\[
\sigma_3(\alpha) = \zeta_8^6 + \zeta_8^9 + \zeta_8^{15} = \zeta_8 + \zeta_8^6 + \zeta_8^7 = \zeta_8^{-4} \alpha = -\alpha
\]
which implies $\sigma_3(\zeta_8^{-k}\alpha) = \zeta_8^{-3k-4}\alpha = \zeta_8^{-7k}\alpha$ and $\sigma_3(\zeta_8^{-3k}\alpha) = \zeta_8^{-9k-4}\alpha = \zeta_8^{-5k}\alpha$ for odd $k$.
From these equations, we get the ties
\begin{align*}
    \pi(N; m, 1) =\pi(N; m, (T+1)^0) &= \pi (N; m, (T+1)^4) = \pi(N; m, 2), \\
    \pi(N; m, T + 1) = \pi(N; m, (T+1)^1) &= \pi(N; m, (T+1)^7) = \pi(N; m, T + 2),\\
    \pi(N; m, 2T+1) = \pi(N; m, (T+1)^3) &= \pi(N; m, (T+1)^5) = \pi(N; m, 2T+2)
\end{align*}
for any odd $N$.

\paragraph{\textbf{When $N$ is even:}}
Now assume $N$ is even.
Write $N = 2^r \cdot N_1$ for $r \ge 1$ and $2 \nmid N_1$.
Assume $a = (T + 1)^k$ for odd $k$.
Then $\pi(N; m, (T+1)^k) = \pi_1(N; m, (T+1)^k)$ and only odd divisors of $N$ contribute to $\pi(N; m, (T+1)^k)$.
For odd $d$, the terms (*) become
\[
\zeta_8^{-2kd^{-1}} + \zeta_8^{-4kd^{-1}} + \zeta_8^{-6kd^{-1}} + \Tr((\zeta_8^{-kN_1^{-1}}\alpha^{2^r})^{\frac{N_1}{d}})
\]
where $N_1^{-1} \cdot N_1 \equiv 1 \pmod{8}$.
Since $r \ge 1$,
\[
\sigma_3(\alpha^{2^r}) = \sigma_3(\alpha)^{2^r} = (-\alpha)^{2^r} = \alpha^{2^r}
\]
and $\sigma_3(\zeta_8^{-k N_1}\alpha^{2^r}) = \zeta_8^{-3kN_1}\alpha^{2^r}$.
Combined with \eqref{eqn:p3T21_sum1}, this shows ties between congruence classes $(T+1)^{k}$ and $(T+1)^{3k}$, i.e.
\begin{align*}
    \pi(N; m, T + 1) = \pi(N; m, (T+1)^1) &= \pi(N; m, (T+1)^3) = \pi(N; m, 2T+1) \\
    \pi(N; m, 2T + 2) = \pi(N; m, (T+1)^5) &= \pi(N; m, (T+1)^7) = \pi(N; m, T+2).
\end{align*}

When $k$ is even, the contributions from odd divisors (that is, $\pi_1(N; m, (T+1)^k)$) stay the same, and they give the same value for $k$ and $3k$.
We have
\begin{align*}
    \pi(N; m, (T+1)^k) &= \pi_1(N; m, (T+1)^k) + \pi_2(N; m, (T+1)^k) \\
    &=\frac{1}{8N} \sum_{2 \nmid d \mid N} \mu(d) \left(3^{\frac{N}{d}} - 2 - \left( \zeta_8^{-2kd^{-1}} + \zeta_8^{-4kd^{-1}} + \zeta_8^{-6kd^{-1}}+ \sum_{i=0}^{3} \zeta_8^{-k(2i+1)d^{-1}} \sigma_{2i+1}(\alpha)^{\frac{N}{d}}\right)\right) \\
    &\quad + \frac{1}{8N} \sum_{2 \mid d \mid N} \mu(d) \left(3^{\frac{N}{d}} - (\zeta_8^{-2k(d/2)^{-1}} + \zeta_8^{-4k(d/2)^{-1}} + \zeta_8^{-6k(d/2)^{-1}})\right)
\end{align*}
which gives the same value for $k$ and $3k$.
This proves the tie
\[
\pi(N; m, 2T) = \pi(N; m, (T+1)^2) = \pi(N; m, (T+1)^6) = \pi(N; m, T).
\]

Table \ref{tab:p3T21group} shows the above ties for each $N$ modulo 2.

\subsubsection{\texorpdfstring{\(m = T^2 \in \bF_p[T]\)}{m = T^2 over characteristic p}}

When $p = 2$, we have $M' = 2$ and $T + 1$ is a generator of $\scU_m$, where $\chi_1 \in \what{\scU}_m$ with $\chi_1(T+1) = -1$ becomes a generator of $\what{\scU}_m$.
Also, $s_{T^2, n} = 1$ for any $n$, since the only irreducible factor $T \mid m$ has degree 1.
The corresponding $L$-function is $\cL(u, \chi_1) = 1 - u$ and $c_n(\chi_1) = -1$ for all $n$.
From \eqref{eqn:pi_coprime_deg}, for odd $N \ge 3$ we have a tie
\[
    \pi(N; m, 1) = \frac{1}{2N} \sum_{d \mid N} \mu(d) \left(2^{\frac{N}{d}} - 1 +(-1)\right) = \frac{1}{2N} \sum_{d \mid N} \left(2^{\frac{N}{d}} - 1 - (-1)\right) = \pi(N; m, T + 1)
\]
since $\sum_{d \mid N} \mu(d) = 0$.
Table \ref{tab:p2T2} shows the number of primes over $\bF_2$ that are $1$ or $T + 1$ modulo $T^2$.

\begin{remark}
\label{rmk:T2p2bias}
    As in Remark \ref{rmk:T2T1bias}, we have a consistent bias for $p = 2$ toward $T + 1$ when $N$ is even and $N > 2$.
    We have
    \begin{align*}
        \pi(N; m, 1) &= \pi_1(N; m, 1) + \pi_2(N; m, 1) = \frac{1}{2N} \sum_{2 \nmid d \mid N} \mu(d) \left(2^{\frac{N}{d}} - 1 - (-1)^{\frac{N}{d}}\right) + \frac{1}{N} \sum_{2 \mid d \mid N} \mu(d) \left(2^{\frac{N}{d}} - 1\right) \\
        \pi(N; m, T + 1) &= \pi_1(N; m, T + 1) = \frac{1}{2N} \sum_{2 \nmid d \mid N} \mu(d) \left(2^{\frac{N}{d}} - 1 + (-1)^{\frac{N}{d}}\right)
    \end{align*}
    By Lemma \ref{lem:mobius_sum_pnmid}, $\pi_1(N; m, 1) = \pi_1(N; m, T + 1)$ for $N > 2$.
    Now,
    \[
    \pi_2(N; m, 1) = \frac{1}{N} \sum_{d \mid \frac{N}{2}} \mu(2d) \left(2^{\frac{N}{2d}} - 1\right) = - \frac{1}{N} \sum_{d \mid \frac{N}{2}} \mu(d) 2^{\frac{N}{2d}} \le -\frac{1}{N} \left(2^{\frac{N}{2}} - 2^{\frac{N}{2} - 1} - 2^{\frac{N}{2} - 2} - \cdots - 1\right) = -\frac{1}{N} < 0,
    \]
    and we get $\pi(N; m, 1) < \pi(N; m, T + 1)$.
\end{remark}

When $p = 3$, we have $M' = 6$ and $c = T + 2$ is a generator of $\scU_m$, where $\chi_1 \in \what{\scU}_m$ with $\chi_1(T + 2) = \zeta_6$ becomes a generator of $\what{\scU}_m$. We also have $s_{m, n} = 1$ for all $n$.
The corresponding $L$-functions are
\begin{equation}
\label{eqn:p2T2Lftn}
    \cL(u, \chi^l) = \begin{cases}
        1 + (2\zeta_6 - 1)u = 1 + \sqrt{-3} u & l = 1 \\
        1 - u & l = 2 \\
        1 & l = 3 \\
        1 - u & l = 4 \\
        1 - (2\zeta_6 - 1)u = 1 - \sqrt{-3} u & l = 5
    \end{cases}
\end{equation}

\paragraph{\textbf{When $N$ is odd:}}

If $N$ is odd and $N > 3$, \eqref{eqn:pi_decomp_pig} gives
\[
\pi(N; m, (T+2)^k) = \pi_{1}(N; m, (T+2)^k) + \pi_3(N; m, (T+2)^k)
\]
where $\pi_1$ and $\pi_3$ are the summands defined in \eqref{eqn:pig_def}. By \eqref{eqn:pig_cyclic}, we have
\begin{align*}
    \pi_1(N; m, (T + 2)^k) 
    &= \frac{1}{6N} \sum_{3 \nmid d \mid N} \mu(d) \left(3^{\frac{N}{d}} - 1 - \left(\zeta_6^{-k d^{-1}}(-\sqrt{-3})^{\frac{N}{d}} + \zeta_6^{-2kd^{-1}} + \zeta_6^{-4kd^{-1}} + \zeta_6^{-5kd^{-1}} (\sqrt{-3})^{\frac{N}{d}}\right)\right) \\
    \pi_3(N; m, (T + 2)^k) &= \frac{\delta_{3 \mid k}}{2N} \sum_{3 \mid d \mid N} \mu(d) \left(3^{\frac{N}{d}} - 1\right) = \frac{\delta_{3 \mid k}}{2N} \sum_{d \mid \frac{N}{3}} \mu(3d) \left(3^{\frac{N}{3d}} - 1\right) = - \frac{\delta_{3 \mid k}}{2N}\sum_{d \mid \frac{N}{3}} \mu(d) 3^{\frac{N}{3d}}.
\end{align*}
In $\pi_3$, the summation over $\chi \ne \chi_0$ vanishes, since the $L$-function of $\chi = \chi_1^3$ has no zeros.
In particular, $\pi_3(N; m, (T+2)^k) = 0$ for $3 \nmid k$, and
\begin{equation}
\label{eqn:pi3_3divk}    
\pi_3(N; m, 1) = \pi_3(N; m, (T+2)^3).
\end{equation}

Assume $3 \nmid k$. Then $\pi_2(N; m, (T+2)^k) = 0$ and $1 + \zeta_6^{-2kd^{-1}} + \zeta_6^{-4kd^{-1}} = 0$, so
\begin{align*}
    \pi_1(N; m, (T+2)^k) = \frac{1}{6N} \sum_{3 \nmid d \mid N} \mu(d) \left(3^{\frac{N}{d}} - \zeta_6^{-kd^{-1}}(-\sqrt{-3})^{\frac{N}{d}} - \zeta_6^{-5kd^{-1}} (\sqrt{-3})^{\frac{N}{d}}\right)
\end{align*}
Now we have a conjugate relation for the zeros $\pm \sqrt{-3}$, namely
\[
\sigma_5(\sqrt{-3}) = \sigma_5(-1 + 2 \zeta_6) = -1 + 2 \zeta_6^5 = -\sqrt{-3} = \zeta_6^3 \sqrt{-3}
\]
and $\sigma_5(-\sqrt{-3}) = \sqrt{-3}$, where $\sigma_5: \zeta_6 \mapsto \zeta_6^5$ is simply the complex conjugation.
Thus
\begin{align*}
    \pi_1(N; m, (T+2)^{5k + 3}) &= \frac{1}{6N} \sum_{3 \nmid d \mid N} \mu(d) \left(3^{\frac{N}{d}} - \zeta_6^{-(5k+3)d^{-1}}(-\sqrt{-3})^{\frac{N}{d}} - \zeta_6^{-5(5k+3)d^{-1}} (\sqrt{-3})^{\frac{N}{d}}\right) \\
    &= \frac{1}{6N} \sum_{3 \nmid d \mid N} \mu(d) \left(3^{\frac{N}{d}} + \zeta_6^{-5kd^{-1}}(-\sqrt{-3})^{\frac{N}{d}} + \zeta_6^{-25kd^{-1}} (\sqrt{-3})^{\frac{N}{d}}\right)\\
    &= \frac{1}{6N} \sum_{3 \nmid d \mid N} \mu(d) \left(3^{\frac{N}{d}} - \zeta_6^{-kd^{-1}}(-\sqrt{-3})^{\frac{N}{d}} - \zeta_6^{-5kd^{-1}} (\sqrt{-3})^{\frac{N}{d}}\right)\\
    &= \pi_1(N; m, (T+2)^k)
\end{align*}
(Note that $d$ and $\frac{N}{d}$ are odd, since we are assuming that $N$ is odd.)
For $k = 1$ and $k = 4$, this gives
\[
\pi(N; m, T+2) = \pi(N; m, (T+2)^2), \quad \pi(N; m, (T+2)^4) = \pi(N; m, (T+2)^5).
\]

For $3 \mid k$, we will use the following lemma.

Now, assume $3 \mid k$ and $N \ne 1, 3$. Then Lemma \ref{lem:mobius_sum_pnmid} with $p = 3$ gives
\begin{align*}
    \pi_1(N; m, 1) &= \frac{1}{6N} \sum_{3 \nmid d \mid N} \mu(d) \left(3^{\frac{N}{d}} - 1 - \left((-\sqrt{-3})^{\frac{N}{d}} + 2 + (\sqrt{-3})^{\frac{N}{d}}\right)\right) \\
    &= \frac{1}{6N} \sum_{3 \nmid d \mid N} \mu(d) 3^{\frac{N}{d}} \\
    &= \frac{1}{6N} \sum_{3 \nmid d \mid N} \mu(d) \left(3^{\frac{N}{d}} - 1 - \left(-(-\sqrt{-3})^{\frac{N}{d}} - 2 - (\sqrt{-3})^{\frac{N}{d}}\right)\right) = \pi_1(N; m, (T+2)^3)
\end{align*}
since $\frac{N}{d}$ is odd and $(\sqrt{-3})^{\frac{N}{d}} + (-\sqrt{-3})^{\frac{N}{d}} = 0$.
Combined with \eqref{eqn:pi3_3divk}, this proves $\pi(N; m, 1) = \pi(N; m, (T+2)^3)$.

\paragraph{\textbf{When $N$ is even:}} As above, \eqref{eqn:pi_decomp_pig} gives
\[
\pi(N; m, (T+2)^k) = \pi_1(N; m, (T+2)^k) + \pi_2(N; m, (T+2)^k) + \pi_3(N; m, (T+2)^k) + \pi_6(N; m, (T+2)^k)
\]
where
\begin{align*}
    \pi_g(N; m, (T+2)^k) = \frac{g \delta_{g \mid k}}{6N} \sum_{\substack{d \mid N \\ \gcd(d, 6) = g}} \mu(d) \left(3^{\frac{N}{d} }- 1 + \sum_{j=1}^{6/g - 1} \zeta_6^{-kj\left(\frac{d}{g}\right)^{-1}}c_{\frac{N}{d}}(\chi^{gj})\right)
\end{align*}
for $g = 1, 2, 3, 6$.
More precisely, we have
\begin{align*}
    \pi_1(N; m, (T+2)^k) &= \frac{1}{6N} \sum_{\substack{d \mid N \\ \gcd(d, 6) = 1}} \mu(d) \left(3^{\frac{N}{d}} - 1 + \sum_{j=1}^{5} \zeta_6^{kjd^{-1}} c_{\frac{N}{d}}(\chi_1^j)\right) \\
    &= \frac{1}{6N} \sum_{\substack{d \mid N \\ \gcd(d, 6) = 1}} \mu(d) \left(3^{\frac{N}{d}} - 1 - \left(\zeta_6^{-k d^{-1}}(-\sqrt{-3})^{\frac{N}{d}} + \zeta_6^{-2kd^{-1}} + \zeta_6^{-4kd^{-1}} + \zeta_6^{-5kd^{-1}} (\sqrt{-3})^{\frac{N}{d}}\right)\right) \\
    \pi_2(N; m, (T+2)^k) &= \frac{\delta_{2 \mid k}}{3N} \sum_{\substack{d \mid N \\ \gcd(d, 6) = 2}} \mu(d) \left(3^{\frac{N}{d}} - 1 + \sum_{j=1}^{2} \zeta_6^{-kj\left(\frac{d}{2}\right)^{-1}} c_{\frac{N}{d}}(\chi_1^{2j})\right) \\
    &= \frac{\delta_{2 \mid k}}{3N} \sum_{\substack{d \mid N \\ \gcd(d, 6) = 2}} \mu(d)\left(3^{\frac{N}{d}} - 1 - \zeta_6^{-k\left(\frac{d}{2}\right)^{-1}} - \zeta_6^{-2k\left(\frac{d}{2}\right)^{-1}}\right) \\
    \pi_3(N; m, (T+2)^k) &= \frac{\delta_{3 \mid k}}{2N} \sum_{\substack{d \mid N \\ \gcd(d, 6) = 3}} \mu(d) \left(3^{\frac{N}{d}} - 1\right) \\
    \pi_6(N; m, (T+2)^k) &= \frac{\delta_{6 \mid k}}{N} \sum_{6 \mid d \mid N} \mu(d) \left(3^{\frac{N}{d}} - 1\right)
\end{align*}
When $k = 2$ or $k = 4$, $1 + \zeta_6^{-k\left(\frac{d}{2}\right)^{-1}} + \zeta_6^{-2k\left(\frac{d}{2}\right)^{-1}} = 0$ for $\gcd(d, 6) = 2$ and $\pi_2$ further simplifies as
\begin{align*}
    \pi_2(N; m, (T+2)^k) = \frac{1}{3N} \sum_{\substack{d \mid N \\ \gcd(d, 6) = 2}} \mu(d) 3^{\frac{N}{d}} = \frac{1}{3N} \sum_{3 \nmid d \mid \frac{N}{2}} \mu(2d) 3^{\frac{N}{2d}} = -\frac{1}{3N} \sum_{3 \nmid d \mid \frac{N}{2}} \mu(d) 3^{\frac{N}{2d}}
\end{align*}

If $k = 1$ or $k = 5$, $\pi_2 = \pi_3 = \pi_6 = 0$ and 
\begin{align*}
    \pi(N; m, T+2) &= \pi_1(N; m, T+2) \\
    &=\frac{1}{6N} \sum_{\substack{d \mid N \\ \gcd(d, 6) = 1}} \mu(d) \left(3^{\frac{N}{d}} - 1 - (\zeta_6^{-d^{-1}} (-\sqrt{-3})^{\frac{N}{d}} + \zeta_6^{-2d^{-1}} + \zeta_6^{-4d^{-1}} + \zeta_6^{-5d^{-1}} (\sqrt{-3})^{\frac{N}{d}})\right) \\
    \pi(N; m, (T+2)^5) &= \pi_1(N; m, (T+2)^5) \\
    &=\frac{1}{6N} \sum_{\substack{d \mid N \\ \gcd(d, 6) = 1}} \mu(d) \left(3^{\frac{N}{d}} - 1 - (\zeta_6^{-5d^{-1}} (-\sqrt{-3})^{\frac{N}{d}} + \zeta_6^{-10d^{-1}} + \zeta_6^{-20d^{-1}} + \zeta_6^{-25d^{-1}} (\sqrt{-3})^{\frac{N}{d}})\right) \\
    &=\frac{1}{6N} \sum_{\substack{d \mid N \\ \gcd(d, 6) = 1}} \mu(d) \left(3^{\frac{N}{d}} - 1 - (\zeta_6^{-5d^{-1}} (-\sqrt{-3})^{\frac{N}{d}} + \zeta_6^{-4d^{-1}} + \zeta_6^{-2d^{-1}} + \zeta_6^{-d^{-1}} (\sqrt{-3})^{\frac{N}{d}})\right)
\end{align*}
Since all $d$'s appearing in the above sums are odd, $\frac{N}{d}$'s are all even and
\[
\zeta_6^{-d^{-1}} (-\sqrt{-3})^{\frac{N}{d}} + \zeta_6^{-5d^{-1}} (\sqrt{-3})^{\frac{N}{d}} = (\zeta_6^{-d^{-1}} + \zeta_6^{-5d^{-1}}) (\sqrt{-3})^{\frac{N}{d}} = \zeta_6^{-5d^{-1}} (-\sqrt{-3})^{\frac{N}{d}} + \zeta_6^{-d^{-1}} (\sqrt{-3})^{\frac{N}{d}},
\]
hence we get $\pi(N; m, T+2) = \pi(N; m, (T+2)^5)$.

If $k = 2$ and $k = 4$, we have $\pi_1(N; m, (T+2)^2) = \pi_1(N; m, (T+2)^4)$ and
\[
\zeta_6^{2\left(\frac{d}{2}\right)^{-1}} + \zeta_6^{4\left(\frac{d}{2}\right)^{-1}} = \zeta_6^{4\left(\frac{d}{2}\right)^{-1}} + \zeta_6^{8\left(\frac{d}{2}\right)^{-1}}
\]
implies $\pi_2(N; m, (T+2)^2) = \pi_2(N; m, (T+2)^4)$, hence $\pi(N; m, (T+2)^2) = \pi(N; m , (T+2)^4)$.

\paragraph{\textbf{When $N$ is a multiple of 4:}} Once we further assume that $N \equiv 0 \pmod{4}$, we can show that
\[
\pi(N; m, T+2) = \pi(N; m, (T+2)^2) = \pi(N; m, (T+2)^4) = \pi(N; m, (T+2)^5).
\]
Since we have already shown the ties for $k = 1, 5$ and $k = 2, 4$, it is enough to show that the first two are the same.
We have
\begin{align*}
    \pi(N; m, T+2) &= \sum_{\substack{d \mid N \\ \gcd(d, 6) = 1}} \mu(d) \left(3^{\frac{N}{d}} - 1 - (\zeta_6^{-d^{-1}}(-\sqrt{-3})^{\frac{N}{d}} + \zeta_6^{-2d^{-1}} + \zeta_6^{-4d^{-1}} + \zeta_6^{-5d^{-1}}(\sqrt{-3})^{\frac{N}{d}}) \right) \\
    \pi(N; m, (T+2)^2) &= \sum_{\substack{d \mid N \\ \gcd(d, 6) = 1}} \mu(d) \left(3^{\frac{N}{d}} - 1 - (\zeta_6^{-2d^{-1}}(-\sqrt{-3})^{\frac{N}{d}} + \zeta_6^{-4d^{-1}} + \zeta_6^{-8d^{-1}} + \zeta_6^{-10d^{-1}}(\sqrt{-3})^{\frac{N}{d}}) \right) \\
    &\quad+ \frac{1}{3N} \sum_{\substack{d \mid N \\ \gcd(d, 6) = 2}} \mu(d) \left(3^{\frac{N}{d}} - 1 - \zeta_6^{-2\left(\frac{d}{2}\right)^{-1}} - \zeta_6^{-4\left(\frac{d}{2}\right)^{-1}}\right).
\end{align*}
For the summations over $d$ with $\gcd(d, 6) = 1$, $\frac{N}{d}$ is a multiple of 4, hence $(-\sqrt{-3})^{\frac{N}{d}} =  3^{\frac{N}{2d}} = (\sqrt{-3})^{\frac{N}{d}}$.
Also, we can easily check that $\zeta_6^{-d^{-1}} + \zeta_6^{-5d^{-1}} = \zeta_6 + \zeta_6^5 = 1$, $\zeta_6^{-2d^{-1}} + \zeta_6^{-4d^{-1}} = \zeta_6^2 + \zeta_6^4 = -1$. Hence the above expressions simplify as
\begin{align*}
    \pi(N; m, T+2) &= \frac{1}{6N} \sum_{\substack{d \mid N \\ \gcd(d, 6) = 1}} \mu(d)\left(3^{\frac{N}{d}} - 1 - (3^{\frac{N}{2d}} - 1)\right) = \frac{1}{6N}\sum_{\substack{d \mid N \\ \gcd(d, 6) = 1}} \mu(d) \left(3^{\frac{N}{d}} - 3^{\frac{N}{2d}}\right) \\
    \pi(N; m, (T+2)^2) &= \frac{1}{6N}\sum_{\substack{d \mid N \\ \gcd(d, 6) = 1}} \mu(d) \left(3^{\frac{N}{d}} - 1 - (-3^{\frac{N}{2d}} - 1)\right) + \frac{1}{3N}\sum_{\substack{d \mid N \\ \gcd(d, 6) = 2}} \mu(d)3^{\frac{N}{d}} \\
    &= \frac{1}{6N} \sum_{\substack{d \mid N \\ \gcd(d, 6) = 1}} \mu(d) \left(3^{\frac{N}{d}} + 3^{\frac{N}{2d}}\right) + \frac{1}{3N} \sum_{\substack{d \mid \frac{N}{2} \\ \gcd(d, 6) = 1}} \mu(2d) 3^{\frac{N}{2d}} \\
    &= \frac{1}{6N} \sum_{\substack{d \mid N \\ \gcd(d, 6) = 1}} \mu(d) \left(3^{\frac{N}{d}} + 3^{\frac{N}{2d}}\right) - \frac{1}{3N} \sum_{\substack{d \mid \frac{N}{2} \\ \gcd(d, 6) = 1}} \mu(d) 3^{\frac{N}{2d}} 
\end{align*}
Now $\pi(N; m, T+2) = \pi(N; m, (T+2)^2)$ follows by comparing the summations, where we have $d \mid N \Leftrightarrow d \mid \frac{N}{2}$ for odd $d$.

Table \ref{tab:p3T2} shows the number of primes over $\bF_3$ in each congruence class modulo $T^2$.

%% file: 4bijection.tex
\section{Explicit bijections}
\label{sec:bijection}

In this section, we provide an alternative approach to proving ties using \(\GL_2(\bF_q)\)-actions.

\subsection{Bijection via $\GL_2(\bF_q)$-action}

For each $c \in (A /m)^\times$, define $\widetilde{\cI}(N; m, c)$ to be the set of irreducible polynomials of degree $N$ satisfying $f \equiv c \pmod{m}$, \emph{not necessarily monic}, and let $\widetilde{\pi}(N; m, c) := \# \widetilde{\cI}(N; m, c)$ be the number of such polynomials.
This set clearly contains $\cI(N; m, c)$.
For $f = \sum_{i=0}^{n} a_i T^i \in \bF_q[T]$, write $\lc(f) := a_n$ for the leading coefficient of $f$.
By normalizing a polynomial, we have
\begin{equation}
    \label{eqn:congnorm}
    f \equiv c \pmod{m} \,\,\Leftrightarrow\,\, \lc(f)^{-1} f \equiv \lc(f)^{-1} c \pmod{m}
\end{equation}
which implies
\[
\#\{f \in \widetilde{\cI}(N; m, c) : \lc(f) = \lambda\} = \pi(N; m, \lambda^{-1} c).
\]
By summing over all possible leading coefficients, we get
\begin{equation}
    \label{eqn:Itildesum}
    \widetilde{\pi}(N; m, c) = \sum_{\lambda \in \bF_q^\times} \pi(N; m, \lambda^{-1} c) = \sum_{\lambda \in \bF_q^\times} \pi(N; m, \lambda c).
\end{equation}
From this, we have the following ``uninteresting`` ties between non-monic prime counts.
\begin{proposition}
    \label{eqn:tie_nonmonic_const}
    For any $c \in (\bF_q[T] / m)^\times$ and $\lambda \in \bF_q^\times$, we have
    \begin{equation}
        \widetilde{\pi}(N; m, c) = \widetilde{\pi}(N; m, \lambda c).
    \end{equation}
\end{proposition}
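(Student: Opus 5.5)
The plan is to read the tie off directly from the summation formula \eqref{eqn:Itildesum}. That formula expresses $\widetilde{\pi}(N; m, c)$ as a sum of $\pi(N; m, \mu c)$ over the whole group $\bF_q^\times$ of constant multipliers. Replacing $c$ by $\lambda c$ only translates the index set, so the sum does not change.

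Concretely, I will first note that $\bF_q^\times$ embeds in $\scU_m = (A/m)^\times$: a nonzero constant is coprime to $m$, so $\lambda c$ is again a unit class and the statement makes sense. Applying \eqref{eqn:Itildesum} to the class $\lambda c$ then gives
\[
\widetilde{\pi}(N; m, \lambda c) = \sum_{\mu \in \bF_q^\times} \pi(N; m, \mu\lambda c).
\]
Since $\mu \mapsto \mu\lambda$ is a bijection of $\bF_q^\times$, I reindex with $\nu = \mu\lambda$ to obtain $\sum_{\nu \in \bF_q^\times} \pi(N; m, \nu c)$. By \eqref{eqn:Itildesum} again, this equals $\widetilde{\pi}(N; m, c)$.

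As a sanity check, there is also a direct bijection that avoids \eqref{eqn:Itildesum}. The map $f \mapsto \lambda f$ sends irreducible polynomials of degree $N$ to irreducible polynomials of degree $N$, and it sends the condition $f \equiv c \pmod m$ to $\lambda f \equiv \lambda c \pmod m$. Its inverse is multiplication by $\lambda^{-1}$. So it is a bijection $\widetilde{\cI}(N; m, c) \to \widetilde{\cI}(N; m, \lambda c)$, which gives the equality of cardinalities immediately.

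There is no real obstacle here. The only step requiring care is the justification of \eqref{eqn:Itildesum} itself, namely the normalization identity \eqref{eqn:congnorm} and the count of polynomials with a fixed leading coefficient. Both are already established just before the statement, so I would present the one-line reindexing argument and mention the scaling bijection as a remark.
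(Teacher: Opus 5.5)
Your argument is correct and is essentially the paper's: the proposition is read off from \eqref{eqn:Itildesum}, since reindexing the sum over the group $\bF_q^\times$ by $\lambda$ leaves it unchanged. Your alternative scaling bijection $f \mapsto \lambda f$ from $\widetilde{\cI}(N; m, c)$ to $\widetilde{\cI}(N; m, \lambda c)$ is also valid, and it is the more direct route because it does not need the normalization step \eqref{eqn:congnorm}.
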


Now, for a matrix $B = \left(\begin{smallmatrix}
        \alpha & \beta \\ \gamma & \delta
    \end{smallmatrix}\right) \in \GL_2(\bF_q)$, define
\begin{equation}
    (f|_n B)(T) := (\gamma T + \delta)^n f\left(\frac{\alpha T + \beta}{\gamma T + \delta}\right) = \sum_{i=0}^{n} a_i (\alpha T + \beta)^i (\gamma T + \delta)^{n-i} \label{eqn:GL2action}
\end{equation}
For example, when $f(0) \ne 0$, $n = \deg f$ and $B = \left(\begin{smallmatrix}
    0 & 1 \\ 1 & 0 \end{smallmatrix}\right)$, then $f|_n B$ is a reciprocal of $f(T)$, i.e.
\[
f(T) = a_n T^n + a_{n-1} T^{n-1} + \cdots + a_1 T + a_0 \Rightarrow (f|_n B)(T) = a_0 T^n + a_1 T^{n-1} + \cdots + a_{n-1} T + a_n.
\]
For each $n$, the map $f \mapsto f|_n B$ defines an action of $\GL_2(\bF_q)$ on $\bF_q[T]$, i.e. $f|_{n}(B_1 B_2 ) = (f|_{n}B_1)|_{n}B_2$ for all $B_1, B_2 \in \GL_2(\bF_q)$.

\begin{proposition}
    \label{prop:gl2bijection_nonmono}
    Let $N \ge 2$ and $B = \left(\begin{smallmatrix}
        \alpha & \beta \\ \gamma & \delta
    \end{smallmatrix}\right) \in \GL_2(\bF_q)$.
    Assume that $m|_{M}B$ is a nonzero constant multiple of $m$ for $M = \deg m$.
    Then the map
    \begin{equation}
    \label{eqn:bij}
        f \mapsto f|_N B
    \end{equation}
    defines a bijection between $\widetilde{\cI}(N; m, c)$ and $\widetilde{\cI}(N; m, c|_{N} B)$.
    In particular, we have a tie
    \[
    \widetilde{\pi}(N; m, c) = \widetilde{\pi}(N; m, c|_N B).
    \]
\end{proposition}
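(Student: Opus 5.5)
The plan is to show three things: the map \eqref{eqn:bij} sends irreducible polynomials of degree $N$ to irreducible polynomials of degree $N$; it carries the congruence $f \equiv c \pmod m$ to $f|_N B \equiv c|_N B \pmod m$; and it is inverted by $g \mapsto g|_N B^{-1}$. Throughout I regard $g \mapsto g|_n B$ as a map on the space $V_n$ of polynomials of degree $\le n$, given by the second expression in \eqref{eqn:GL2action}. Two formal facts will be used repeatedly, and both are immediate from that expression:
\begin{itemize}
\item \emph{multiplicativity}: $(gh)|_{r+s} B = (g|_r B)(h|_s B)$ for $g \in V_r$, $h \in V_s$;
\item \emph{compatibility with the group law}: $(g|_n B_1)|_n B_2 = g|_n (B_1B_2)$ for $g \in V_n$.
\end{itemize}
The second fact gives $(f|_N B)|_N B^{-1} = f$ on $V_N$, so $f \mapsto f|_N B$ is a linear bijection of $V_N$ with inverse $|_N B^{-1}$.

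\textbf{Degree.} Let $f$ be irreducible of degree $N \ge 2$. If $\gamma \ne 0$, the coefficient of $T^N$ in $f|_N B$ is $\gamma^N f(\alpha/\gamma)$. This is nonzero because an irreducible polynomial of degree at least $2$ has no root in $\bF_q$; this is exactly where the hypothesis $N \ge 2$ enters. If $\gamma = 0$, the coefficient is $\lc(f)\,\alpha^N \ne 0$. In both cases $\deg(f|_N B) = N$.

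\textbf{Irreducibility.} Suppose $f|_N B = gh$ with $\deg g = r$ and $\deg h = N-r$, where $1 \le r \le N-1$. Applying $|_N B^{-1}$ and multiplicativity gives
\[
f = (g|_r B^{-1})(h|_{N-r} B^{-1}).
\]
The two factors have degrees at most $r$ and at most $N-r$, while their product has degree $N$. Hence they have degrees exactly $r$ and $N-r$, which is a nontrivial factorization of $f$, a contradiction.

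\textbf{Congruence.} I define $c|_N B$ as the class of $\tilde c|_N B$ for a representative $\tilde c \in V_N$ of $c$. If $N \ge M$, write $f = \tilde c + m h$ with $\tilde c$ reduced mod $m$ and $\deg h \le N - M$. Then
\[
f|_N B = \tilde c|_N B + (m|_M B)(h|_{N-M} B) = \tilde c|_N B + \lambda\, m\,(h|_{N-M}B),
\]
using the hypothesis $m|_M B = \lambda m$. So $f|_N B \equiv c|_N B \pmod m$. The same computation shows the class is independent of the representative $\tilde c \in V_N$. If $N < M$, then $f$ itself is the only representative of $c$ in $V_N$, so there is nothing to check.

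\textbf{Conclusion.} Applying the same argument to $B^{-1}$ (note $m|_M B^{-1} = \lambda^{-1} m$) shows that $|_N B^{-1}$ maps $\widetilde{\cI}(N; m, c|_N B)$ back into $\widetilde{\cI}(N; m, c)$. Combined with the inverse relation, this gives the bijection and hence the tie.

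\textbf{Main obstacle.} I expect the delicate point to be the bookkeeping that makes $c|_N B$ a well-defined unit class. One must choose representatives inside $V_N$ and verify that $c|_N B$ is coprime to $m$. For coprimality I would argue as follows: if an irreducible $P \mid m$ divided $c|_N B$, then $P$ would divide every irreducible $f|_N B$ in the image. That is impossible unless $f|_N B$ is an associate of $P$, which can happen for at most a few $f$. If that edge case cannot be excluded cleanly, I would handle it by restricting to $N$ large relative to $M$.
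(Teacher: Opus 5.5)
Your proof is correct and follows the same route as the paper's: the $T^N$-coefficient is $\gamma^N f(\alpha/\gamma)$, irreducibility comes from multiplicativity of the slash operator, the congruence comes from writing $f = \tilde c + mh$ and using $m|_M B = \lambda m$, and $B^{-1}$ gives the inverse. Your irreducibility step, which pulls a factorization of $f|_N B$ back with $B^{-1}$ and tracks the degrees, is slightly more careful than the paper's one-line ``and vice versa''. Your closing worry about coprimality is not needed for the bijection, and your sketch there would only work for large $N$. In any case it holds cleanly for every $N$: $m|_M B = \lambda m$ forces $\gamma\theta+\delta \neq 0$ and $m\bigl(\frac{\alpha\theta+\beta}{\gamma\theta+\delta}\bigr) = 0$ for each root $\theta$ of $m$. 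Hence $(c|_N B)(\theta) = (\gamma\theta+\delta)^N c\bigl(\frac{\alpha\theta+\beta}{\gamma\theta+\delta}\bigr) \neq 0$.
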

\begin{proof}
    To show that the map preserves the degree, note that the leading coefficient of $f|_N B$ is
    \begin{equation}
        \label{eqn:lc}
        \sum_{i=0}^{N} a_i \alpha^i \gamma^{N-i} = \begin{cases}
            \gamma^N \sum_{i=0}^{N} a_i \left(\frac{\alpha}{\gamma}\right)^i = \gamma^N f\left(\frac{\alpha}{\gamma}\right) & \gamma \ne 0 \\
            a_N \alpha^N & \gamma = 0.
        \end{cases}
    \end{equation}
    When $\gamma \ne 0$, $f\left(\frac{\alpha}{\gamma}\right) \ne 0$ since $f$ is irreducible of degree $N \ge 2$ and cannot have a zero in $\bF_q$.
    When $\gamma$ is 0, $\alpha \ne 0$ (otherwise $B$ is not invertible) and $a_N \alpha^N \ne 0$.
    The map is a bijection, since the inverse map is given by
    \[
        f \mapsto f|_N B^{-1}.
    \]
    Regarding irreducibility, if $f = gh$ is reducible then $f|_n B = g|_{n_1} B \cdot h|_{n_2} B$ for $n_1 = \deg g$ and $n_2 = \deg h$, so $f|_n B$ is also reducible and vice versa.
    Finally, from $f \in \cI(N; m, c)$, one can write $f = gm + c$, and applying $|_N B$ on both sides gives
    \[
        f|_N B = (g|_{\deg g} B) \cdot (m|_{M} B) + c|_N B = (g|_{\deg g} B) \cdot \alpha' m + c|_N B
    \]
    for some nonzero constant $\alpha' \in \bF_q$, so $f|_N B \equiv c|_N B \pmod{m}$.
\end{proof}

Although the congruence class $c|_N B$ of the target $\cI(N; m, c|_{N} B)$ seems to depend on the degree $N$, it depends only on $N$ modulo a certain number (for fixed $B$).

\begin{lemma}
\label{lem:action_period}
    Let $m \in \bF_q[T]$ be a monic polynomial and $B = \left(\begin{smallmatrix} \alpha & \beta \\ \gamma & \delta \end{smallmatrix} \right) \in \GL_2(\bF_q)$ be a matrix such that $m|_{M} B = \lambda \cdot m$ for some $\lambda \in \bF_q^\times$ and $M = \deg m$.
    For a polynomial $c \in \bF_q[T]$ coprime to $m$, the sequence $N \mapsto c|_N B \pmod{m}$ is periodic in $N$ with the period equal to the order of $\gamma T + \delta$ in $(\bF_q[T] / m)^\times$.
\end{lemma}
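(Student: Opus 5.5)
The plan is to show that, once $N$ is at least $\deg c$, raising $N$ by one multiplies $c|_N B$ by the fixed factor $\gamma T+\delta$. The statement then reduces to two facts: $\gamma T + \delta$ is a unit modulo $m$, and the starting term $c|_n B$ is a unit modulo $m$.

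\emph{Step 1 (recursion in $N$).} Let $n = \deg c$. For $N \ge n$, the defining expansion \eqref{eqn:GL2action} gives
\[
c|_N B = (\gamma T + \delta)^{N-n}\, (c|_n B).
\]
This is clear from writing $c|_N B = (\gamma T+\delta)^N c\bigl(\tfrac{\alpha T+\beta}{\gamma T+\delta}\bigr)$. Put $u := \gamma T + \delta$ and $y := c|_n B$. Then the sequence in question is $x_N = u^{N-n} y \pmod m$. If $u$ and $y$ are both units in $\scU_m$, then $x_{N+p} = x_N$ holds if and only if $u^p = 1$ in $\scU_m$. So the minimal period is exactly the order of $u$, for all $N$ in the range where $|_N$ is applied in Proposition~\ref{prop:gl2bijection_nonmono}.

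\emph{Step 2 ($u$ is a unit).} If $\gamma = 0$, then $u = \delta \in \bF_q^\times$ and there is nothing to prove. If $\gamma \neq 0$, suppose the irreducible factor $\gamma T + \delta$ divides $m$, and let $t = -\delta/\gamma$. Evaluating the expanded form of $m|_M B$ at $t$ kills every term except the top one, so $(m|_M B)(t) = (\alpha t + \beta)^M$. This is nonzero, because $\alpha t + \beta = 0 = \gamma t + \delta$ would contradict $\det B \ne 0$. On the other hand $(m|_M B)(t) = \lambda m(t) = 0$, a contradiction.

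\emph{Step 3 ($y$ is a unit).} This is the step I expect to need the most care. Suppose an irreducible $P$ divides both $y$ and $m$, and write $m = Ph$ and $y = Pg$. Apply $B^{-1}$, using multiplicativity of the action (as in the irreducibility part of the proof of Proposition~\ref{prop:gl2bijection_nonmono}) together with $y|_n B^{-1} = c$. This gives
\[
c = (P|_{\deg P} B^{-1})\,(g|_{n-\deg P} B^{-1}), \qquad \lambda^{-1} m = m|_M B^{-1} = (P|_{\deg P} B^{-1})\,(h|_{M-\deg P} B^{-1}).
\]
Here $m|_M B^{-1} = \lambda^{-1} m$ follows by applying $B^{-1}$ to $m|_M B = \lambda m$. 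So $Q := P|_{\deg P} B^{-1}$ is a common factor of $c$ and $m$. It remains to check that $Q$ is nonconstant. By \eqref{eqn:lc}, the degree of $Q$ drops below $\deg P$ only if $P$ vanishes at the point $\alpha'/\gamma'$ determined by $B^{-1}$. In that case $m$ also vanishes there. Then the same computation \eqref{eqn:lc} shows that $m|_M B^{-1}$ has degree $< M$, contradicting $m|_M B^{-1} = \lambda^{-1} m$. Hence $\deg Q = \deg P \ge 1$, so $Q$ is a nonconstant common factor of $c$ and $m$, contradicting $\gcd(c,m)=1$. Combining Steps 1–3 gives the lemma.
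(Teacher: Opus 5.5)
Your proof is correct and follows the same route as the paper: the recursion $c|_{N+1}B = (\gamma T+\delta)\,(c|_N B)$, combined with the fact that both $\gamma T+\delta$ and $c|_N B$ are units modulo $m$, forces the minimal period to be the order of $\gamma T+\delta$. The paper only asserts that $c|_N B$ is coprime to $m$ ``by Proposition~\ref{prop:gl2bijection_nonmono}'' and takes for granted that $\gamma T+\delta$ is a unit, whereas your Steps 2 and 3 prove both facts directly, so your version is more self-contained.
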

\begin{proof}
    We have $(c|_{n+1} B)(T) = (\gamma T + \delta) (c|_{n} B)(T)$.
    Also, note that $c|_n B$ is coprime to $m$ for all $n$, which follows from Proposition \ref{prop:gl2bijection_nonmono}.
    Then we have $c|_{n + k} B = c|_{n} B$ for all $n$, where $k$ is the order of $\gamma T + \delta$ in the group $(\bF_q[T] / m)^\times$.
\end{proof}

\begin{corollary}
\label{cor:tie_periodic}
Let $m \in \bF_q[T]$ be a monic polynomial and $B = \left(\begin{smallmatrix}
    \alpha & \beta \\ \gamma & \delta
\end{smallmatrix}\right) \in \GL_2(\bF_q)$ be a matrix satisfying the same assumptions as in Lemma \ref{lem:action_period}, and let $N_0$ be the order of $\gamma T + \delta$ in $(\bF_q[T] / m)^\times$.
Fix $e \ge M - 1$ and let $c \in \bF_q[T]$ be a polynomial coprime to $m$ and $\deg c < M$.
For each $N \equiv e \pmod{N_0}$, we have a tie
\[
\widetilde{\pi}(N; m, c) = \widetilde{\pi}(N; m, c|_{e} B)
\]
induced by the bijection \eqref{eqn:bij}.
\end{corollary}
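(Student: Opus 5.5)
The plan is to combine Proposition \ref{prop:gl2bijection_nonmono} with Lemma \ref{lem:action_period}, and then check that the residue class $c|_N B \pmod{m}$ agrees with $c|_e B \pmod{m}$ whenever $N \equiv e \pmod{N_0}$.

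\textbf{Step 1: the bijection.} Let $N \equiv e \pmod{N_0}$ with $N \ge 2$. Proposition \ref{prop:gl2bijection_nonmono} gives a bijection $f \mapsto f|_N B$ from $\widetilde{\cI}(N; m, c)$ onto $\widetilde{\cI}(N; m, c|_N B)$. Hence
\[
\widetilde{\pi}(N; m, c) = \widetilde{\pi}(N; m, c|_N B).
\]
If $N = 1$ is allowed in the progression, this case only arises when $e$ is small, and I would treat it separately or exclude it. The hypothesis $N \ge 2$ in the proposition is used only to guarantee that the degree is preserved.

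\textbf{Step 2: reduce $c|_N B$ to $c|_e B$.} The definition \eqref{eqn:GL2action} makes sense for any $n \ge \deg c$. Since $\deg c < M \le e+1$, we have $\deg c \le e$, so $c|_e B$ is defined. For every $n \ge \deg c$,
\[
c|_{n+1} B = (\gamma T + \delta)\, c|_n B .
\]
Writing $N = e + tN_0$, this gives
\[
c|_N B = (\gamma T + \delta)^{N - e}\, c|_e B \equiv c|_e B \pmod{m},
\]
because $(\gamma T + \delta)^{N_0} \equiv 1 \pmod{m}$. This is exactly the periodicity statement of Lemma \ref{lem:action_period}. The requirement $e \ge M - 1$ ensures that $e$ is a legitimate starting index, since every $c$ of degree $< M$ has degree $\le e$.

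\textbf{Step 3: handling $N < e$.} If $N < e$ (possible when $t < 0$), then $c|_N B$ may not be defined for $N < \deg c$. In that case I would use the same identity in the other direction, multiplying by the inverse of $\gamma T + \delta$ modulo $m$. Equivalently, I would restrict to $N \ge e$, or observe that $c|_N B$ only matters modulo $m$.

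I also need $\gamma T + \delta$ to be a unit modulo $m$. This follows because $c|_n B$ is coprime to $m$: apply Proposition \ref{prop:gl2bijection_nonmono}, or argue directly from $m|_M B = \lambda m$.

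The main obstacle is bookkeeping in Step 2. The class of $c|_N B$ must be identified with the class of $c|_e B$ even though the two are computed at different weights. The recurrence handles this, and Lemma \ref{lem:action_period} already gives the needed periodicity. Combining Steps 1 and 2 yields $\widetilde{\pi}(N; m, c) = \widetilde{\pi}(N; m, c|_e B)$.
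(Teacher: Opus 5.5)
Your proposal is correct and is essentially the paper's own argument. The paper gives no separate proof of the corollary; it combines Proposition~\ref{prop:gl2bijection_nonmono} in degree $N$ with the recurrence $c|_{n+1}B=(\gamma T+\delta)\,c|_nB$ and the periodicity of Lemma~\ref{lem:action_period} to replace $c|_NB$ by $c|_eB$ modulo $m$, exactly as in your Steps~1--2.

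Your caution about small $N$ is well placed.
\begin{itemize}
\item $N=1$ genuinely has to be excluded. For $m=T^3+T+1\in\bF_2[T]$, $B=\left(\begin{smallmatrix}1&1\\1&0\end{smallmatrix}\right)$, $e=8$ and $c=1$, one gets $\widetilde{\pi}(1;m,1)=0\neq 1=\widetilde{\pi}(1;m,T)$.
\item For $2\le N<\deg c$, multiplying by $(\gamma T+\delta)^{-1}$ only names a residue class; it does not supply a bijection.
\end{itemize}
So the clean scope of your argument, and of the paper's, is $N\ge\max(2,\deg c)$. There you simply run the recurrence upward from $\min(N,e)$ to $\max(N,e)$.
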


However, our main interest is in ties for irreducible \emph{monic} polynomials, and the map \eqref{eqn:bij} does not preserve monicity in general.
We are interested in the case where we can choose $B$ so that
\begin{equation}
    \label{eqn:monicdeg}
    \lc(f|_N B) = 1 \,\,\text{for all}\,f \in \cI(N; m, c)
\end{equation}
There are several cases in which \eqref{eqn:monicdeg} holds.

\begin{proposition}
    \label{prop:Bchoice1}
    For $N \ge 2$ and $B = \left(\begin{smallmatrix} \alpha & \beta \\ \gamma & \delta \end{smallmatrix}\right) \in \GL_2(\bF_q)$, \eqref{eqn:monicdeg} holds if
    \begin{enumerate}
        \item $q = 2$
        \item $\alpha^N = 1$ and $\gamma = 0$ (in particular, when $(\alpha, \gamma) = (1, 0)$)
    \end{enumerate}
\end{proposition}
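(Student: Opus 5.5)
The plan is to reduce both cases to the leading-coefficient computation \eqref{eqn:lc} from the proof of Proposition \ref{prop:gl2bijection_nonmono}. That computation shows that for $f = \sum_{i=0}^N a_i T^i$ of degree $N \ge 2$, the degree-$N$ coefficient of $f|_N B$ is
\[
\sum_{i=0}^{N} a_i \alpha^i \gamma^{N-i} =
\begin{cases}
\gamma^N f(\alpha/\gamma) & \gamma \ne 0,\\
a_N \alpha^N & \gamma = 0.
\end{cases}
\]
The same proof shows this coefficient is nonzero whenever $f$ is irreducible of degree $N \ge 2$. If $\gamma \neq 0$, then $f$ has no root in $\bF_q$, so $f(\alpha/\gamma) \ne 0$. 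If $\gamma = 0$, then $\alpha \ne 0$ by invertibility of $B$, and $a_N \ne 0$. In particular, $\deg(f|_N B) = N$ and $\lc(f|_N B)$ equals this quantity. So for each case it suffices to show that this quantity equals $1$ for every $f \in \cI(N; m, c)$.

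For case (1), $q = 2$: the quantity just established is a nonzero element of $\bF_2$, so it must equal $1$. Hence $f|_N B$ is monic for every irreducible $f$ of degree $N \ge 2$, regardless of $c$ and $m$.

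For case (2), $\gamma = 0$: the leading coefficient is $a_N \alpha^N$. Since $f \in \cI(N; m, c)$ is monic, $a_N = 1$, and the hypothesis $\alpha^N = 1$ gives $\lc(f|_N B) = 1$. The special case $(\alpha,\gamma)=(1,0)$ is immediate because then $\alpha^N = 1$ for all $N$.

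I expect no real obstacle. The only point needing care is to confirm that the top coefficient does not vanish, so that it really is the leading coefficient and the degree stays $N$. That nonvanishing is exactly what the proof of Proposition \ref{prop:gl2bijection_nonmono} already provides, and this is where the hypothesis $N \ge 2$ enters.
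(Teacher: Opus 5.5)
Your proposal is correct and follows essentially the same route as the paper. Both compute the $T^N$-coefficient of $f|_N B$ via \eqref{eqn:Ncoeff}, show it is nonzero (irreducibility of degree $N \ge 2$ when $\gamma \ne 0$, invertibility of $B$ when $\gamma = 0$), conclude it must equal $1$ when $q = 2$, and read off $a_N\alpha^N = 1$ directly in case (2).
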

\begin{proof}
    By \eqref{eqn:GL2action}, the coefficient of $T^N$ in $f|_N B$ is
    \begin{equation}
        \label{eqn:Ncoeff}
        \sum_{i=0}^{N} a_i \alpha^i \gamma^{N-i} = \begin{cases}
            \gamma^N \sum_{i=0}^{N} a_i \left(\frac{\alpha}{\gamma}\right)^i = \gamma^N f\left(\frac{\alpha}{\gamma}\right) & \text{if }\gamma \ne 0 \\
            a_N \alpha^N & \text{if }\gamma = 0
        \end{cases}
    \end{equation}
    When $q = 2$, note that the only nonzero element of $\bF_2$ is $1$.
    If $\gamma \ne 0$, $f(\frac{\alpha}{\gamma}) \ne 0$ since $f$ is irreducible of degree $\ge 2$, hence $\gamma^N f(\frac{\alpha}{\gamma}) = 1$.
    When $\gamma = 0$, the condition $B \in \GL_2(\bF_2)$ gives $\alpha = 1$, and the coefficient of $T^N$ in $f|_N B$ is again nonzero.

    For the second case, monicity is again preserved by \eqref{eqn:Ncoeff}.
\end{proof}

\subsection{Examples}

\subsubsection{\texorpdfstring{\(m = T^2 + T + 1 \in \bF_2[T]\)}{m = T^2 + T + 1 over characteristic 2}}
\label{subsec:bij_T2T1}

$m(T) = T^2 + T + 1$ is fixed under the action of any matrix in $\GL_2(\bF_2)$.
For example, it is fixed under the action of $B = \left(\begin{smallmatrix} 0 & 1 \\ 1 & 0 \end{smallmatrix} \right) \in \GL_2(\bF_2)$, i.e. $T^2 m\left(\frac{1}{T}\right) = m(T)$, which reflects the fact that $m(T)$ is palindromic.
$\gamma T + \delta = T$ has order $3$ in $(\bF_2[T] / m)^\times \simeq \bZ/3$, which means that ties occur in the prime race modulo $m(T)$ for each degree class modulo $3$.
In particular, the action sends
\begin{align*}
    N \equiv 0 \pmod{3} &: 1 \mapsto 1, \quad T \mapsto T + 1, \quad T+1 \mapsto T \\
    N \equiv 1 \pmod{3} &: 1 \mapsto T, \quad T \mapsto 1, \quad T + 1 \mapsto T + 1 \\
    N \equiv 2 \pmod{3} &: 1 \mapsto T + 1, \quad T \mapsto T, \quad T + 1 \mapsto 1
\end{align*}
and this yields the ties
\begin{align*}
    N \equiv 0 \pmod{3} &: \pi(N; m, T) = \pi(N; m, T+1) \\
    N \equiv 1 \pmod{3} &: \pi(N; m, 1) = \pi(N; m, T) \\
    N \equiv 2 \pmod{3} &: \pi(N; m, 1) = \pi(N; m, T+1).
\end{align*}
If we consider the translation matrix $B = \left(\begin{smallmatrix} 1 &1 \\ 0 & 1 \end{smallmatrix}\right)$, it yields $\pi(N; m, T) = \pi(N; m, T + 1)$ for \emph{all} $N$.
Combining these, we get the same ties as in Section \ref{subsubsec:p2T2T1_explicit} (Table \ref{tab:T2T1group}).

More generally, any palindromic polynomial $m(T) \in \bF_2[T]$ is fixed under the action of $B = \left(\begin{smallmatrix} 0 & 1 \\ 1 & 0 \end{smallmatrix}\right)$, and it induces a tie $\pi(N; m, c) = \pi(N; m, c|_e B)$ for any $c \in (\bF_2[T] / m)^\times$ for all $N \equiv e \pmod{N_0}$, where $N_0$ is the order of $T$ in $(\bF_2[T] / m)^\times$.

\subsubsection{\texorpdfstring{\(m = T^3 + T + 1 \in \bF_2[T]\)}{m = T^3 + T + 1 over characteristic 2}}
\label{subsec:bij_T3T1}

Consider the matrix $B = \left(\begin{smallmatrix} 1 & 1 \\ 1 & 0 \end{smallmatrix} \right) \in \GL_2(\bF_2)$.
Then $m(T) = T^3 + T + 1$ is fixed under the action of $B$, i.e. $T^2 m\left(\frac{1}{T}+1\right) = m(T)$, and $\gamma T + \delta = T$ has order $7$ in $(\bF_2[T]/m)^{\times}$.
By Corollary \ref{cor:tie_periodic}, we have ties $\pi(N; m, c) = \pi(N; m, c|_e B)$ for each $e \in \{0, 1, \dots, 6\}$ and $N \equiv e \pmod{7}$.
For example, when $e = 1$, the $\GL_2$-action sends 
\[
1 \mapsto T,\quad 
T \mapsto T + 1,\quad 
T + 1 \mapsto 1,
\]
and
\[
T^2 \mapsto T^2 + T,\quad 
T^2 + T \mapsto T^2 + T + 1,\quad 
T^2 + T + 1 \mapsto T^2,
\]
which gives the ties
\begin{align*}
    \pi(N; m, 1) &= \pi(N; m, T) = \pi(N; m, T+1),\\
    \pi(N; m, T^2) &= \pi(N; m, T^2+T) = \pi(N; m, T^2+T+1).
\end{align*}
Similarly, we obtain the ties for the other residue classes modulo $7$, which agree with those in Section \ref{subsubsec:p2T3T1} (Table \ref{tab:T3T1}).

\subsubsection{\texorpdfstring{\(m = T^2 + 1 \in \bF_3[T]\)}{m = T^2 + 1 over characteristic 3}}
\label{subsubsec:p3T21}

Observe that $m$ is irreducible in $\mathbb{F}_3[T]$. Let $B = \left(\begin{smallmatrix} 1 & 0 \\ 0 & 2 \end{smallmatrix} \right)$. Then $(m|_2 B)(T) = m(T)$ and $\gamma T + \delta = 2$ has order $2$ in $\left(\mathbb{F}_3[T]/(T^2 + 1)\right)^\times$. We thus consider degrees modulo $2$; the action sends each congruence class as
\begin{align*}
    N \equiv 0 \pmod{2} : \quad & 1 \mapsto 1, \quad 2 \mapsto 2, \quad T \mapsto 2T, \quad T + 1 \mapsto 2T + 1, \quad T + 2 \mapsto 2T + 2, \\ 
    & 2T \mapsto T, \quad 2T + 1 \mapsto T + 1, \quad 2T + 2 \mapsto T + 2; \\
    N \equiv 1 \pmod{2} : \quad & 1 \mapsto 2, \quad 2 \mapsto 1, \quad T \mapsto T, \quad T + 1 \mapsto T + 2, \quad T + 2 \mapsto T + 1, \\ 
    & 2T \mapsto 2T, \quad 2T + 1 \mapsto 2T + 2, \quad 2T + 2 \mapsto 2T + 1;
\end{align*}
and this gives the same ties as in Section \ref{subsubsec:p3T21explicit} (Table \ref{tab:p3T21group}).

\subsubsection{\texorpdfstring{\(m = T^2 \in \bF_p[T]\)}{m = T^2 over characteristic p}}
\label{subsubsec:T2}

Let $m(T) = T^2 \in \bF_p[T]$.
When $p = 2$, one can take $B = \left(\begin{smallmatrix}
    1 & 0 \\ 1 & 1
\end{smallmatrix}\right)$ so that for odd $N$ and $c(T) = 1$, we have
\[
(c|_{N}B)(T) = (T + 1)^N \equiv NT + 1 \equiv T + 1 \pmod{T^2},
\]
and we get the tie $\pi(N; m, 1) = \pi(N; m, T + 1)$ (Table \ref{tab:p2T2}).

For odd $p$, we cannot use the same $B$ since the map $f \mapsto f|_{N}B$ does not preserve monicity in general (even if $m|_2 B = m$).
Instead, we choose $B = \left(\begin{smallmatrix} 1 & 0 \\ 0 & \delta\end{smallmatrix}\right)$, and for $c(T) = c_1 T + c_0 \in (\bF_p[T] / m)^\times$,
\begin{equation}
    (c|_N B)(T) = \delta^N \left(c_1 \frac{T}{\delta} + c_0\right) = \delta^{N-1} c_1 T + \delta^N c_0.
\end{equation}
If the order of $\delta \in \bF_p^\times$ is $N_0$, then for each $e \ge 1$ coprime to $p - 1$, the action of $B$ gives ties $\pi(N; m, c) = \pi(N; m, c|_e B)$ for $N \equiv e \pmod{N_0}$.
When $\delta$ is a generator of the multiplicative group $\bF_p^\times$ (so has order $N_0 = p - 1$), $\delta^e$ also has order $p - 1$ and $(c, c|_e B, c|_e B^2, \dots, c|_e B^{p-2})$ forms a cycle of length $p-1$ that induces ties
\[
\pi(N; m, c_1 T + c_0) = \pi(N; m, \delta^{e - 1} c_1 T + \delta^e c_0) = \cdots = \pi(N; m, \delta^{(p-2)(e - 1))}c_1 T + \delta^{(p-2)e} c_0)
\]
In particular, when $e = 1$, it shows that $\pi(N; m, c_1 T + c_0)$ is independent of $c_0 \ne 0$ for $N \equiv 1 \pmod{p - 1}$.

For example, when $p = 3$ and $B = \left(\begin{smallmatrix} 1 & 0 \\ 0 & 2 \end{smallmatrix}\right)$, $\delta = 2 \in \bF_3^\times$ has order $2$, and it gives the ties
\begin{align*}
    \pi(N; m, 1) = \pi(N; m, 2), \quad \pi(N; m, T + 1) = \pi(N; m, T + 2), \quad \pi(N; m, 2T + 1) = \pi(N; m, 2T + 2)
\end{align*}
for odd $N$.
For even $N$, the same $B$ gives ties between $c_1 T + c_0$ and $2c_1 T + c_0$, i.e.
\begin{align*}
    \pi(N; m, T + 1) = \pi(N; m, 2T + 1), \quad \pi(N; m, T + 2) = \pi(N; m, 2T + 2).
\end{align*}

\subsubsection{\texorpdfstring{\(m = T^p - T - 1 \in \bF_p[T]\)}{m = T^p - T - 1 over characteristic p}}

Artin--Schreier polynomials $m(T) = T^p - T - a \in \bF_p[T]$ are irreducible for any $a \in \bF_p^\times$.
The polynomial is fixed under the action of $B = \left(\begin{smallmatrix} 1 & 1 \\ 0 & 1 \end{smallmatrix}\right)$, i.e. translation $T \mapsto T + 1$.
Hence it induces ties $\pi(N; m, c_1) = \pi(N; m, c_2)$ for $c_1, c_2 \in (\bF_p[T] / m)^\times$ whenever
\[
c_2(T) = c_1(T + b)\quad\text{for some}\,\,b \in \bF_p.
\]
For example, when $p = 3$, $m(T) = T^3 - T - 1 = T^3 + 2T + 2$ and $N = 24$, we have
\[
\pi(24; m, T^2) = \pi(24; m, T^2 + 2T + 1) = \pi(24; m, T^2 + T + 1) = 452605575.
\]

%% file: 5conclusion.tex
\section{Conclusion}

In this paper, we proposed two approaches for proving ties in prime counts over function fields: one via explicit formulas and one via bijections arising from \(\GL_2\)-actions.
A natural question is how these two approaches are related, and whether one method is always stronger than the other.
For example, in Section \ref{subsubsec:T2} we showed that the ties
\[
\pi(N; m, T + 1) = \pi(N; m, T + 2) = \pi(N; m, 2T + 1) = \pi(N; m, 2T + 2)
\]
hold for $m(T) = T^2 \in \mathbb{F}_3[T]$ and $N \equiv 0 \pmod{4}$, but the bijection method can only prove half of these ties, and we could not find an appropriate matrix that yields all of them. However, we also found that it is often easier to prove ties by finding a suitable $B \in \GL_2(\bF_q)$ that gives the desired bijection than by finding exceptional Galois conjugates.

One may also ask whether there are interesting ties for the cumulative version of prime counts considered in \cite{cha2008chebyshev} and the subsequent literature.
Although there are some examples with small $N$, we conjecture that such ties are very rare and almost never happen.
\begin{conjecture}
    Let $m \in \mathbb{F}_q[T]$ and let $a, b \in (\mathbb{F}_q[T]/m)^\times$ be two distinct congruence classes modulo $m$.
    Then there are finitely many $N$ such that
    \[
        \sum_{n=1}^{N} \pi(n; m, a) = \sum_{n=1}^{N} \pi(n; m, b).
    \]
\end{conjecture}
For example, Table \ref{tab:T3T1cum} shows the cumulative prime counts for $m = T^3 + T + 1 \in \bF_2[T]$, where no ties appear for $22 \le N \le 40$.

In Remark \ref{rmk:T2T1bias}, we also proved that the primes are always biased against $c = 1$ modulo $T^2 + T + 1 \in \bF_2[T]$, that is, the inequality $\pi(N; m, 1) < \pi(N; m, T)$ holds whenever $N \ge 9$ is a multiple of $3$.
It would be interesting to prove this by constructing an \emph{injection} from $\cI_2(N; m, 1)$ to $\cI_2(N; m, T)$.

%% file: appendix.tex
\appendix

\section{Tables}

\begin{table}[h]
\centering
\begin{tabular}{c|ccc}
\toprule
{$N$} & {$1$} & {$T$} & {$T^2$} \\
\midrule
10 & \myg{33} & \myg{33} & \myg{33} \\
11 & \myg{62} & \myg{62} & \myg{62} \\
12 & 111      & \myb{112} & \myb{112} \\
13 & \myg{210} & \myg{210} & \myg{210} \\
14 & \myg{387} & \myg{387} & \myg{387} \\
15 & 726      & \myb{728} & \myb{728} \\
16 & \myg{1360} & \myg{1360} & \myg{1360} \\
17 & \myg{2570} & \myg{2570} & \myg{2570} \\
18 & 4842      & \myb{4845} & \myb{4845} \\
19 & \myg{9198} & \myg{9198} & \myg{9198} \\
20 & \myg{17459} & \myg{17459} & \myg{17459} \\
\midrule
$0 \pmod{3}$ &   & \myb{$\circ$}    & \myb{$\circ$}    \\
$1 \pmod{3}$ & \myg{\ding{73}}   & \myg{\ding{73}}    & \myg{\ding{73}}    \\
$2 \pmod{3}$ & \myg{\ding{73}}   & \myg{\ding{73}}   & \myg{\ding{73}}     \\
\bottomrule
\end{tabular}%
\caption{Ties in the number of primes modulo \(T^2 + T + 1 \in \mathbb{F}_2[T]\) for degrees from 10 to 20, together with the corresponding patterns modulo 3.}
\label{tab:T2T1group}
\end{table}


\begin{table}[h]
\centering
\resizebox{\columnwidth}{!}{%
\begin{tabular}{c|cccccccc}
\toprule
{$N\pmod{2}$} & {$1$} & {$T+1$} & {$(T+1)^2$} & {$(T+1)^3$} & {$(T+1)^4$} & {$(T+1)^5$} & {$(T+1)^6$} & {$(T+1)^7$} \\
\midrule
10 & 720 & \myb{737} & \myg{732} & \myb{737} & 744 & \myr{739} & \myg{732} & \myr{739} \\
11 & \myb{2013} & \myg{2025} & 2004 & \myr{2001} & \myb{2013} & \myr{2001} & 2022 & \myg{2025} \\
12 & 5506 & \myb{5554} & \myg{5520} & \myb{5554} & 5534 & \myr{5516} & \myg{5520} & \myr{5516} \\
13 & \myb{15330} & \myg{15325} & 15282 & \myr{15335} & \myb{15330} & \myr{15335} & 15378 & \myg{15325} \\
14 & 42720 & \myb{42745} & \myg{42666} & \myb{42745} & 42612 & \myr{42665} & \myg{42666} & \myr{42665} \\
15 & \myb{119572} & \myg{119484} & 119548 & \myr{119660} & \myb{119572} & \myr{119660} & 119596 & \myg{119484} \\
16 & 336387 & \myb{336243} & \myg{336200} & \myb{336243} & 336013 & \myr{336362} & \myg{336200} & \myr{336362} \\
17 & \myb{949560} & \myg{949440} & 949848 & \myr{949680} & \myb{949560} & \myr{949680} & 949272 & \myg{949440} \\
18 & 2690096 & \myb{2690030} & \myg{2690142} & \myb{2690030} & 2690188 & \myr{2690800} & \myg{2690142} & \myr{2690800} \\
19 & \myb{7646457} & \myg{7646865} & 7647144 & \myr{7646049} & \myb{7646457} & \myr{7646049} & 7645770 & \myg{7646865} \\
20 & 21790236 & \myb{21792137} & \myg{21791664} & \myb{21792137} & 21793092 & \myr{21792667} & \myg{21791664} & \myr{21792667} \\
\midrule
$0 \pmod{2}$ &  & \myb{$\circ$} & \myg{\ding{73}} & \myb{$\circ$} & & \myr{$\triangle$} & \myg{\ding{73}} & \myr{$\triangle$}   \\
$1 \pmod{2}$ & \myb{$\circ$} & \myg{\ding{73}} & & \myr{$\triangle$} & \myb{$\circ$} & \myr{$\triangle$} & & \myg{\ding{73}} \\
\bottomrule
\end{tabular}%
}
\caption{Ties in the number of primes modulo \(T^2 + 1 \in \mathbb{F}_3[T]\) for degrees from 10 to 20, together with the corresponding patterns modulo 2.}
\label{tab:p3T21group}
\end{table}

\begin{table}[h]
\centering
\begin{tabular}{c|cc}
\toprule
{$N$} & {$1$} & {$T+1$}  \\
\midrule
10 & 48 & 51 \\ 
11 & \myb{93} & \myb{93} \\
12 & 165 & 170 \\
13 & \myb{315} & \myb{315} \\
14 & 576 & 585 \\ 
15 & \myb{1091} & \myb{1091} \\ 
16 & 2032 & 2048 \\ 
17 & \myb{3855} & \myb{3855} \\ 
18 & 7252 & 7280 \\
19 & \myb{13797} & \myb{13797} \\
20 & 26163 & 26214 \\ 
\midrule
$0 \pmod{2}$ & & \\
$1 \pmod{2}$ & $\myb{\circ}$ & $\myb{\circ}$ \\
\bottomrule
\end{tabular}%
\caption{Number of irreducible polynomials modulo \(T^2 \in \mathbb{F}_2[T]\) in the two nontrivial congruence classes for each degree $N$.}
\label{tab:p2T2}
\end{table}

\begin{table}[h]
\centering
\begin{tabular}{c|cccccc}
\toprule
$N$ & $1$ & $T+2$ & $(T+2)^2$ & $(T+2)^3$ & $(T+2)^4$ & $(T+2)^5$   \\
\midrule
 10  & 984 & \myr{988} & \myb{972} & 976 & \myb{972} & \myr{988} \\
 11  & \myr{2684} & \myb{2673}  & \myb{2673} & \myr{2684} & \myg{2695} & \myg{2695} \\
 12  & 7338 & \myb{7371} & \myb{7371}  & 7398 & \myb{7371} & \myb{7371}  \\
 13  & \myr{20440} & \myb{20468} & \myb{20468} & \myr{20440} & \myg{20412} & \myg{20412} \\
 14  & 56940 & \myr{56966} & \myb{56862} & 56888 & \myb{56862} & \myr{56966} \\
 15  & \myr{159424} & \myb{159359} & \myb{159359} & \myr{159424} & \myg{159505} & \myg{159505} \\
 16  & 448130 & \myb{448335} & \myb{448335} & 448540 & \myb{448335} & \myb{448335} \\
 17  & \myr{1266080} & \myb{1266273} & \myb{1266273} & \myr{1266080} & \myg{1265887} & \myg{1265887}  \\
 18  & 3587208 & \myr{3587409} & \myb{3586680} & 3586842 & \myb{3586680} & \myr{3587409} \\
 19  & \myr{10195276} & \myb{10194758} & \myb{10194758} & \myr{10195276} & \myg{10195794} & \myg{10195794} \\
 20  & 29054568 & \myb{29056044} & \myb{29056044} & 29057520 & \myb{29056044} & \myb{29056044} \\
\midrule
$0 \pmod{4}$ & & $\myb{\circ}$ & $\myb{\circ}$ & & $\myb{\circ}$ & $\myb{\circ}$ \\
$1 \pmod{4}$ & $\myr{\triangle}$ & $\myb{\circ}$ & $\myb{\circ}$ & $\myr{\triangle}$ & \myg{\ding{73}} & \myg{\ding{73}} \\
$2 \pmod{4}$ & & $\myr{\triangle}$ & $\myb{\circ}$ & & $\myb{\circ}$ & $\myr{\triangle}$ \\
$3 \pmod{4}$ & $\myr{\triangle}$ & $\myb{\circ}$ & $\myb{\circ}$ & $\myr{\triangle}$ & \myg{\ding{73}} & \myg{\ding{73}} \\
\bottomrule
\end{tabular}%
\caption{Number of irreducible polynomials modulo \(T^2 \in \mathbb{F}_3[T]\) in the six nontrivial congruence classes for each degree $N$.}
\label{tab:p3T2}
\end{table}

\begin{table}[h!]
\centering
\begin{tabular}{c|cccccccc}
\toprule
\textbf{$N$} & \textbf{$1$} & \textbf{$T$} & \textbf{$T^2$} & \textbf{$T+1$} & \textbf{$T^2+T$} & \textbf{$T^2+T+1$} & \textbf{$T^2+1$} \\
\midrule
1 & \myb{0} & \myg{1} & \myb{0} & \myg{1} & \myb{0} & \myb{0} & \myb{0} \\
2 & \myb{0} & \myg{1} & \myb{0} & \myg{1} & \myb{0} & \myg{1} & \myb{0} \\
3 & \myb{0} & \myg{1} & \myb{0} & \myg{1} & \myg{1} & \myg{1} & \myb{0} \\
4 & 0 & 2 & \myb{1} & \myb{1} & \myb{1} & \myb{1} & \myb{1} \\
5 & \myb{1} & 3 & \myb{1} & \myg{2} & \myg{2} & \myg{2} & \myg{2} \\
6 & 2 & \myb{3} & \myb{3} & \myg{4} & \myb{3} & \myg{4} & \myb{3} \\
7 & \myb{5} & \myg{6} & \myg{6} & \myg{6} & \myg{6} & \myg{6} & \myb{5} \\
8 & 9 & \myb{10} & \myb{10} & \myb{10} & \myb{10} & \myb{10} & 11 \\
9 & 16 & \myb{19} & 17 & \myb{19} & \myb{19} & \myg{18} & \myg{18} \\
10 & \myb{31} & 33 & \myg{32} & 34 & \myb{31} & \myg{32} & \myg{32} \\
11 & \myb{59} & \myb{59} & \myg{58} & \myg{58} & \myb{59} & 60 & \myg{58} \\
12 & \myb{105} & \myb{105} & \myg{108} & \myr{107} & \myg{108} & 106 & \myr{107} \\
13 & 194 & 201 & \myb{197} & \myg{196} & \myb{197} & 195 & \myg{196} \\
14 & 362 & 363 & \myb{359} & \myg{365} & \myb{359} & 364 & \myg{365} \\
15 & 672 & 673 & \myb{675} & \myb{675} & \myb{675} & 680 & 669 \\
16 & 1260 & 1255 & 1263 & \myb{1257} & \myb{1257} & 1250 & \myb{1257} \\
17 & 2353 & 2364 & 2356 & 2350 & 2361 & 2359 & 2366 \\
18 & 4428 & 4433 & 4425 & 4450 & 4436 & 4434 & 4435 \\
19 & 8379 & 8384 & \myb{8385} & 8377 & 8363 & \myb{8385} & 8362 \\
20 & 15881 & 15842 & \myb{15856} & 15848 & 15865 & \myb{15856} & 15864 \\
21 & 30089 & \myb{30124} & 30138 & 30116 & 30147 & \myb{30124} & 30132 \\
22 & 57347 & 57382 & 57327 & 57374 & 57336 & 57313 & 57348 \\
23 & 109455 & 109448 & 109435 & 109440 & 109402 & 109513 & 109456 \\
24 & 209321 & 209224 & 209301 & 209306 & 209346 & 209289 & 209232 \\
25 & 400992 & 401057 & 401134 & 400970 & 401017 & 400960 & 401065 \\
26 & 769719 & 769784 & 769546 & 769704 & 769751 & 769687 & 769799 \\
27 & 1479730 & 1479808 & 1479863 & 1480021 & 1479762 & 1480004 & 1479810 \\
28 & 2849599 & 2849372 & 2849427 & 2849299 & 2849326 & 2849282 & 2849088 \\
29 & 5494035 & 5493808 & 5494161 & 5493735 & 5494060 & 5494016 & 5494368 \\
30 & 10606547 & 10607133 & 10606673 & 10607060 & 10607385 & 10606772 & 10606880 \\
31 & 20503110 & 20503464 & 20503236 & 20503623 & 20502369 & 20503103 & 20503211 \\
32 & 39677586 & 39676638 & 39676410 & 39676353 & 39676845 & 39677579 & 39676385 \\
33 & 76862350 & 76861402 & 76863706 & 76862819 & 76863311 & 76862343 & 76862851 \\
34 & 149046001 & 149048350 & 149046544 & 149045657 & 149046962 & 149045181 & 149046502 \\
35 & 289290199 & 289290123 & 289288317 & 289291420 & 289288735 & 289290944 & 289292265 \\
36 & 561985241 & 561985165 & 561985719 & 561986462 & 561986137 & 561988346 & 561981893 \\
37 & 1092641229 & 1092635899 & 1092641707 & 1092637196 & 1092636871 & 1092634490 & 1092637881 \\
38 & 2126009274 & 2126013541 & 2126009752 & 2126005241 & 2126015143 & 2126012132 & 2126015523 \\
39 & 4139763776 & 4139768673 & 4139764884 & 4139779049 & 4139769645 & 4139766634 & 4139770655 \\
40 & 8066595506 & 8066600403 & 8066595408 & 8066591969 & 8066582565 & 8066598364 & 8066583575 \\
\bottomrule
\end{tabular}%
\caption{Cumulative count of the number of primes modulo \(T^3 + T + 1 \in \mathbb{F}_2[T]\) in each of the seven congruence classes up to degree $N$. The columns are ordered as \(1, T, T^2, T^3, \dots, T^6\).}
\label{tab:T3T1cum}
\end{table}